\tikzset{
  hv/.style = {to path = {-|(\tikztotarget)\tikztonodes}},
  vh/.style = {to path = {|-(\tikztotarget)\tikztonodes}},
}
\tikzset{
    line/.style={draw, very thick, -latex},
    block/.style={rectangle, draw, text centered, rounded corners, fill=blue!30},
    colblock/.style={rectangle, draw, fill=green!20, text centered, rounded corners},
    col2block/.style={rectangle, draw, fill=blue!20, text centered, rounded corners},
    onslide/.code args={<#1>#2}{\only<#1>{\pgfkeysalso{#2}}},
}
\tikzset{arrowfill/.style={#1,general shadow={fill=black, shadow yshift=-0.8ex, path fading=arrowfading}}}
\tikzset{arrowstyle/.style n args={3}{draw=#2,arrowfill={#3}, single arrow,minimum height=#1, single arrow,
single arrow head extend=.3cm,}}
\Crefname{ALC@unique}{Line}{Lines}
\crefname{algocf}{alg.}{algs.}
\Crefname{algocf}{Algorithm}{Algorithms}
\DeclareMathOperator{\op}{\operatorname{op}}
\newcommand{\floor}[1]{\left\lfloor\,#1\,\right\rfloor}
\newcommand{\ceil}[1]{\left\lceil\,#1\,\right\rceil}
\newcommand{\invP}{q}
\newcommand{\N}{\mathbb{N}}
\newcommand*\abs[1]{\lvert#1\rvert}
\newcommand\F{\mathbb{F}}
\newcommand\Z{\mathbb{Z}}
\DeclareMathOperator{\fl}{\bm{fl}}
\DeclareMathOperator\bsz{\bm{bitsize}}
\DeclareMathOperator\FMOD{\texttt{FMOD}}
\DeclareMathOperator{\fma}{\texttt{fma}}
\title{Multiword matrix multiplication over large finite fields in floating-point arithmetic%
\thanks{Version of the 19th of December 2025.}}
\author{
  Jérémy Berthomieu%
  \thanks{Sorbonne Université, CNRS, LIP6, F-75005, Paris, France \\
  (\email{jeremy.berthomieu@lip6.fr},
  \email{stef.graillat@lip6.fr},
  \email{dimitri.lesnoff@lip6.fr},
  \email{theo.mary@lip6.fr})}
\and
  Stef Graillat%
  \footnotemark[2]
\and
  Dimitri Lesnoff%
  \footnotemark[2]
  \thanks{Université Paris-Cité, LIPADE, 75006, Paris, France}
\and
  Theo Mary%
  \footnotemark[2]
}
\begin{document}

\maketitle

\begin{abstract}
  This article is concerned with the efficient computation of modular matrix multiplication $C=AB\bmod p$, a key kernel in computer algebra.
  We focus on floating-point arithmetic, which allows for using efficient matrix multiplication libraries.
  However, the existing approach is limited to primes $p$ with bitsize at most half the mantissa size (e.g., 26 bits with double precision arithmetic),
  and becomes quite inefficient when $p$ approaches this limit.
  We present a new approach that overcomes this limitation and can efficiently handle primes with larger bitsizes.
  The key idea is to use multiword decompositions $A=\sum_{i=0}^{u-1} \alpha^i A_i$ and
  $B=\sum_{j=0}^{v-1} \beta^j B_j$, which represent $A$ and $B$ as the scaled sum of
  $u$ and $v$ matrices (words) $A_i$ and $B_j$ with smaller coefficients.
  The product $C$ can then be reconstructed by computing $uv$ modular products $A_iB_j \bmod p$.
  We provide a rigorous analysis that proves the correctness of this approach for suitably chosen scaling parameters
  $\alpha$ and $\beta$. Our analysis determines the maximum bitsize of $p$
  that can be handled for a given $(u,v)$ decomposition; in particular, we show that using a $(2,2)$ decomposition
  suffices to handle bitsizes almost equal to the full mantissa size (e.g., the 26 bits limit is raised to 52 bits in double precision arithmetic).
  Moreover, we show that $(1,v)$ decompositions with $v>1$ are also of interest to handle intermediate bitsizes.
  We perform an extensive experimental analysis for various matrix shapes and prime bitsizes.
  Our performance benchmarks on both CPU and GPU architectures confirm the efficiency of the proposed approach,
  which can outperform the existing single word approach for bitsizes as low as 23, and can handle bitsizes as high as 52
  while retaining high performance.
\end{abstract}

\begin{keywords}
  matrix multiplication, multiword decomposition, modular arithmetic, finite fields, floating-point arithmetic, CPU, GPU, high-performance computing, rounding error
\end{keywords}

\begin{AMS}
  65Y05, 65Y20, 65F99, 65G50
\end{AMS}

\section{Introduction}\label{s:intro}

We are interested in efficiently computing the modular matrix product
\begin{equation}\label{eq.goal}
  C = AB \bmod p,
\end{equation}
where $p\in\N$ is prime, which is a key kernel in computer algebra problems.
Indeed, solving computer algebra problems requires efficient yet exact linear
algebra operations on rational numbers, such a
matrix inversion~\cite{BunchHopcroft74}~\cite[Chapter 16]{BurgisserClausenShokrollahi97}, linear system solving, PLUQ factorization, echelon
form, characteristic or minimal polynomial.
A direct computation with rationals is
infeasible due to the growth of intermediate coefficients~\cite[Section 5.2]{GeddesCzaporLabahn92}~\cite[Section 6.1]{MCA}. To
circumvent this issue, computations are done over a finite field of modular
integers $\Z/p\Z$, and the exact solution is reconstructed
using, for example, the Chinese remainder theorem.
Moreover, this reconstruction has a chance of not being valid for some values of $p$,
so it is desirable to handle values as large as possible to minimize this chance~\cite{Arnold03}.
Therefore, in this article, we aim to efficiently compute \eqref{eq.goal}
for large values of $p$.

To this purpose, most computer algebra systems implement elementary
arithmetic operations and linear algebra subroutines over finite fields,
see for example FLINT~\cite{flint}, NTL~\cite{ntl} and FFLAS/Linbox~\cite{fflas-ffpack}.
These libraries use either integer or floating-point arithmetic to represent finite field elements.
For a fixed bitsize,
floating-point arithmetic generally provides better performance due to the availability
of SIMD (Single Instruction, Multiple Data) instructions, such as SSE, AVX, and FMA,
and can take advantage of the BLAS (Basic Linear Algebra Subprograms) libraries,
which are highly optimized on modern CPUs and GPUs.
However, current floating-point approaches are limited by the restriction to
finite fields with prime moduli smaller than $2^{26}$, which corresponds to half the mantissa bitsize
in double-precision arithmetic.
For primes larger than $2^{26}$,
one can either switch to arbitrary-precision arithmetic, which is slower and
lacks the same level of hardware acceleration available to standard precision
floating-point operations, or resort to multimodular arithmetic
based on the Chinese remainder theorem (CRT)~\cite{dgls18}, which significantly
increases the number of operations.


In this article,
we propose new matrix multiplication algorithms that are able to
handle primes larger than $2^{26}$ while still using floating-point BLAS matrix
operations, thereby better leveraging the performance potential of multicore CPUs
and GPUs. At the same time, our proposed algorithm requires less operations
than multimodular, CRT-based approaches for primes less than $2^{52}$, the full mantissa bitsize in
double-precision arithmetic. Thus, our algorithm outperforms existing approaches
for primes between half and the full mantissa bitsize.

The key idea behind our approach is to use the matrix multiword decompositions
\begin{equation}\label{eq.intro-decomp}
  A = \sum_{i=0}^{u-1} \alpha^i A_i, \qquad
  B = \sum_{j=0}^{v-1} \beta^j B_j,
\end{equation}
for which \eqref{eq.goal} becomes
\begin{equation}\label{eq.intro-product}
  C = \sum_{i=0}^{u-1} \sum_{j=0}^{v-1} \alpha^i \beta^j A_i B_j \bmod p.
\end{equation}
With a suitable choice of the scaling parameters $\alpha$ and $\beta$, the coefficients of
matrices $A_i$ and $B_j$ can be made sufficiently small so that the products $A_iB_j \bmod p$ can be
efficiently computed with classical floating-point modular matrix multiplication algorithms.
We describe how to compute the decompositions \eqref{eq.intro-decomp} and the product \eqref{eq.intro-product}
in floating-point arithmetic, and we carry out a rigorous analysis to determine how to choose $\alpha$ and $\beta$
and to prove the correctness of the algorithms. In particular, we determine the maximum size of $p$
that can be handled depending on the number of words $u$ and $v$.
This allows for adaptively selecting $u$ and $v$ based on the size of $p$, and thus to optimize the cost of the algorithm
which is proportional to $uv$. We also present a concatenated variant of the algorithm that
stacks together the $B_j$ (respectively $A_i$) matrices to increase the
arithmetic intensity of the product, and is particularly efficient
when $B$ (respectively $A$) is
a tall-and-skinny (respectively short-and-wide) matrix. We implement the proposed algorithms on both multicore CPU and GPU architectures,
and perform numerical experiments that confirm their ability to handle primes as large as $2^{52}$ while retaining high performance.

The rest of this article is organized as follows.
We first describe in \Cref{s:sw}
the existing single word algorithm and its limitations. We then propose the new multiword algorithms
in \Cref{s:mw}. We report our numerical experiments in \Cref{s:bench}. Finally,
we provide some concluding remarks in \Cref{s:concl}.

\section{Existing single word algorithm and its limitations}\label{s:sw}

Throughout this article, we consider computations on integers using a floating-point arithmetic with $t$ bits of significand;
for IEEE double precision, $t=53$. We define $\F$ the set of floating-point numbers
that are nonnegative integers: this set certainly includes all integers $x$ such that $0 \leq x \le 2^t$.
We also define $\F_p = \left\{0, \ldots, p-1\right\}$ the set of nonnegative integers
less than $p$.
For the entirety of the article, we assume that $t\ge3$ and $p\ge 5$.

We denote by $\fl(\cdot)$ the result of a floating-point computation, where all operations inside parentheses are done in floating-point working precision.
We recall that floating-point operations in the IEEE 754 standard satisfy, in absence of underflow or overflow,
\begin{equation}\label{eq:stdModel}
  \fl(a \op b) = (a \op b) (1 + \eta),\quad \abs{\eta} \leq \epsilon / ( 1 + \epsilon), \quad \op \in \{+, -, \times, / \},
\end{equation}
where $\epsilon=2^{-t}$ is the unit roundoff~\cite{jeru13}.

Moreover, we
assume a fused multiply-add $\fma$ instruction
is available, where $\fma(a, b, c) := \fl(c+ab)$ is the correct floating-point rounding of $c+ab$.

\subsection{Modular reductions in floating-point arithmetic}

Computing exactly with finite fields elements using floating-point arithmetic
requires defining an efficient modulo operator similar to the predefined operator for integer types.
Each element
can be reduced using the $\FMOD$\footnote{\url{https://en.cppreference.com/w/cpp/numeric/math/fmod}} instruction where $\FMOD{(x,y)} = \fl\left(x - \floor{\frac{x}{y}}y\right)$.
In a finite field, we always reduce by the same modulus $y=p$
and so we may precompute its floating-point inverse $\invP = \fl(1/p)$.
This yields \Cref{alg:fpModularReductionFMA} given in \cite[Algorithm 3.1]{vdHoevenLQ2016}.

\begin{algorithm2e}[h]
  \caption{Floating-point reduction}\label{alg:fpModularReductionFMA}
  \SetSideCommentLeft{}
  \DontPrintSemicolon{}
  \SetKwInOut{Input}{Input}\SetKwInOut{Output}{Output}
  \Input{$x\in\F < 2^t$, a prime number $p\in\F < 2^{t-1}$, and $\invP = \fl(1/p)$.}
  \Output{$d\in\F_p$ such that $d = x \bmod p$.}

    $b = x\invP$\;
    $c = \floor{b}$\;
    $d = \fma(-c, p, x)$\quad\tcp{$x - cp$}\nllabel{lst:line:axpy}
    \If{$d \geq{} p$} {%
      $d=d-p$\;
    }
    \If{$d < 0$} {%
      $d=d+p$\;
    }
\end{algorithm2e}

\begin{proposition}~\label{prop:fpModularReductionFMA}
  \Cref{alg:fpModularReductionFMA} is correct for any integer input $x\in\F$ and
  a modulus $p$ such that $4 \leq p < 2^{t-1}$ and $x \leq 2^{t-2}p$.
\end{proposition}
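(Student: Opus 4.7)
The plan is to trace the algorithm step by step, controlling the rounding error at each stage, and to show that the two conditional corrections at the end suffice to recover $x \bmod p$. In exact arithmetic one would have $c = \lfloor x/p \rfloor$ and hence $d = x - cp = x \bmod p \in [0,p)$; the whole argument reduces to showing that, under the hypotheses on $p$ and $x$, the floating-point integer $c$ differs from $\lfloor x/p\rfloor$ by at most one.

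First I would analyze $b = \fl(x\invP)$ with $\invP = \fl(1/p)$. Applying the standard IEEE model \eqref{eq:stdModel} twice gives $b = (x/p)(1+\eta_1)(1+\eta_2)$ with $|\eta_i|\le \epsilon/(1+\epsilon)$, and therefore
\[
  |b - x/p| \le (x/p)\,\bigl|\eta_1+\eta_2+\eta_1\eta_2\bigr| \le (x/p)\cdot 2\epsilon\,(1+O(\epsilon)).
\]
Combining this with the hypothesis $x \le 2^{t-2}p$ yields $|b - x/p| < 1$. Since shifting a real number by strictly less than one can change its floor by at most one, $c = \lfloor b\rfloor$ lies in $\{\lfloor x/p\rfloor - 1,\ \lfloor x/p\rfloor,\ \lfloor x/p\rfloor + 1\}$.

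Next I would argue that the $\fma$ on \Cref{lst:line:axpy} is exact. The operands $x$, $c$, $p$ are all nonnegative integers, and the three possible values of $c$ above imply $x - cp \in [-p, 2p)$. In particular $|x - cp| < 2p < 2^t$, so $x - cp$ is an integer representable in $\F$; correctly-rounded arithmetic therefore returns it exactly, and $d = x - cp$. From $d \in \{x \bmod p,\ (x\bmod p)-p,\ (x\bmod p)+p\} \subseteq [-p, 2p)$, the two conditional branches at the end of the algorithm move $d$ into $[0,p)$, which is the required value $x \bmod p$.

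The main obstacle is the first error bound: one has to carry the sharp form $\epsilon/(1+\epsilon)$ of the rounding error through and check that the factor $2^{t-2}$ in the hypothesis on $x$ lines up exactly with $|b-x/p|<1$, rather than with a weaker inequality that would allow $c$ to drift by two and invalidate the fixed two-branch correction at the end. The conditions $p\ge 4$ and $t\ge 3$ are what make the $O(\epsilon)$ remainder harmless in this verification. Once this tight bound is in hand, the remaining steps reduce to short integer checks.
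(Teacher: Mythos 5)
Your proof is correct and follows essentially the same route as the paper's: merge the two rounding errors into a relative perturbation bounded by $2\epsilon$, use $x \le 2^{t-2}p$ to conclude that $c$ differs from $\lfloor x/p\rfloor$ by at most one, note $|x-cp| < 2p < 2^t$ so the fma result is exact, and let the two conditional branches finish the reduction. The only cosmetic differences are that the paper phrases the quotient analysis through the Euclidean division $x=qp+r$ (writing $b=q+\gamma$ and bounding $\gamma$) rather than through $|b-x/p|<1$, and it explicitly checks $b<2^t$ so that the floor is representable and computed exactly, a point you leave implicit.
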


\begin{proof}
  As $\invP=\fl(1/p)$ it follows from \eqref{eq:stdModel} that
  $\invP = (1/p) (1 + \eta_1)$ with $\abs{\eta_1} \leq \epsilon / (1 + \epsilon)$.
  Similarly as $b = \fl(x\invP)$, we have that $b = x\invP (1 + \eta_2) = (x/p)(1 + \eta_1)(1 + \eta_2)$
  with $\abs{\eta_2} \leq \epsilon / ( 1 + \epsilon)$. Approximation terms can
  be merged into one since $b = (x/p)(1 + \eta)$ with
  $\eta = \eta_1 + \eta_2 + \eta_1\eta_2$ and
  $\abs{\eta} \leq 2 \epsilon \frac{(1 + \frac{3}{2}\epsilon)}{(1+\epsilon)^2} \leq 2 \epsilon$ (as $\epsilon \geq 0$).
  As by hypothesis, $x \le 2^{t-2} p$, we have:
  $b \leq \frac{x}{p} (1 + \eta) < (1 + \frac{2\epsilon(1 + \frac{3}{2}\epsilon)}{(1+\epsilon)^2}) 2^{t-2} \leq 2^t$,
  as the expression inside the parenthesis is bounded by 3. As $b < 2^t$,
  its integer part can be stored as a floating-point number. As
  a consequence, $c$ is exactly equal to $\floor{b}$.
  By definition of the Euclidean division of $x$ by $p$, there exist some
  integers $q$ and $r$ such that $x = qp + r$ with $0 \leq r < p$.
  It follows that $b = q(1 + \eta) + \frac{r}{p}(1+\eta)$ which can be written as:


  \begin{equation}
    b = q + \underbrace{q\eta + \frac{r}{p}(1 + \eta)}_{\gamma}.
  \end{equation}

  We can deduce that $\gamma \leq 2\epsilon q + (r/p)(1 + 2\epsilon)$.
  As $r < p$ and $q \leq x/p \le 2^{t-2}$ then $2\epsilon q\leq 1/2$
  and so $\gamma \leq 3/2 + 2\epsilon < 2$ as long as $t \geq 3$. Moreover
  $\gamma \geq -2\epsilon q+ (r/p) (1 - 2\epsilon)$ and so similarly
  $\gamma \geq -1/2 - 2 \epsilon > -1$.
  We can conclude that $b$ belongs to the interval $\left]q-1, q+2 \right[$
  and so $c = \floor{b} \in \left\{ q-1, q, q+1 \right\}$. Let us now verify
  that $x - cp < 2^t$ and so is exactly representable by a
  floating-point number. If $c = q$ then $x-cp = r < p < 2^t$. If
  $c = q-1$ then $x - cp = p + r \leq 2p - 1 < 2^t$. Finally
  if $c = q + 1$ then $x - cp = r - p$ so
  $\abs{x - cp} \leq p < 2^t$.
\end{proof}

\Cref{prop:fpModularReductionFMA} improves the bounds found in \cite[Proposition 2.1]{vdHoevenLQ2016}
on both the modulus $p$ and the maximum element $x$ that can be reduced.
Indeed, in \cite{vdHoevenLQ2016},
the $p$ is limited to $2^{(t-1)/2}$
and $x$ is limited to
$2^{(t-1)/2}p$ which is smaller than our bound $2^{t-2}p$ for $t \ge 3$.

\begin{algorithm2e}[h]
  \caption{Modular product reduction~\cite[Function~3.6]{vdHoevenLQ2016}.\label{alg:Joris}}
  \SetSideCommentLeft{}
  \DontPrintSemicolon{}
  \SetKwInOut{Input}{Input}\SetKwInOut{Output}{Output}
  \Input{$x\in\F$ and $y\in\F$ satisfying $xy \le 2^{t-2}p$, a prime number $p\in\F \le 2^{t-1}$ and $\invP = \fl(1/p).$}
  \Output{$e\in\F$ such that $e = xy \bmod p$.}

    $h = \fl(xy)$\;
    $l = \fma(x, y, -h)$\quad\tcp{$xy-h$}
    $b = \fl(h\invP)$\;
    $c = \floor{b}$\;
    $d = \fma(-c, p, h)$\quad\tcp{$h - cp$}
    $e = \fl(d + l)$\;
    \If{$e \geq{} p$} {%
      $e=e-p$\;
    }
    \If{$e < 0$} {%
      $e=e+p$\;
    }
\end{algorithm2e}

In some cases we need to reduce the product of two integers whose
result would overflow before reduction, that is, be larger than $2^t$
and thus not necessarily
in $\F$.
These cases can be handled with
\Cref{alg:Joris}, given in \cite[Function~3.6]{vdHoevenLQ2016}.
The next result is once more an improved version of \cite[Proposition~3.7]{vdHoevenLQ2016}.

\begin{proposition}\label{prop:Joris}
  \Cref{alg:Joris} is correct for integer input $x$ and $y$ in $\F$
  such that their product satisfies $xy \le \frac{2^{t-1}}{3}p$ and for input $p \le 2^{t-1}$ for all $t \ge 3$.
\end{proposition}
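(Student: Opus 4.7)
\emph{I plan to adapt the proof of \Cref{prop:fpModularReductionFMA}, tracking the extra low part $l$ produced by the error-free 2-product.} Since the FMA is exact, $h + l = xy$ holds exactly with $h, l \in \F$, and the standard rounding bound gives $|l| \le \epsilon\,xy$. Combined with the hypothesis $xy \le \frac{2^{t-1}}{3}p$, this yields $|l| \le \frac{p}{6}$. Moreover $h$ is itself an integer: if $xy < 2^t$ then $\fl(xy) = xy$, and if $xy \ge 2^t$ then $\fl(xy)$ is a multiple of $\mathrm{ulp}(h) \ge 2$, hence an integer.

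Mimicking \Cref{prop:fpModularReductionFMA}, one writes $b = \frac{h}{p}(1+\eta)$ with $|\eta| \le 2\epsilon + \epsilon^2$, and decomposes $xy = qp + r$ with $0 \le r < p$. Substituting $h = xy - l$ yields
\[
b - q = q\eta + \frac{r - l}{p}(1+\eta).
\]
Using $q \le \frac{2^{t-1}}{3}$ (so $|q\eta| \le \frac{1}{3} + O(\epsilon)$) together with the bound on $|l|/p$, one gets $-\frac{1}{2} - O(\epsilon) < b - q < \frac{3}{2} + O(\epsilon)$, and also $b < 2^t$. Hence $c = \floor{b}$ is exactly computed and lies in $\{q-1, q, q+1\}$.

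The crucial step, which I expect to be the main obstacle, is showing that $d = \fma(-c, p, h) = h - cp$ exactly; since $h - cp$ is an integer, this reduces to proving $|h - cp| < 2^t$. For $c \in \{q, q+1\}$, the bound $|xy - cp| \le p$ gives $|h - cp| \le p + |l| < \frac{7p}{6} < 2^t$. The delicate case is $c = q - 1$, for which a priori $|xy - cp| = p + r$ could be as large as $2p$; here I would exploit the implication that $c = q - 1$ forces $b < q$, i.e., $q\eta + (r-l)(1+\eta)/p < 0$. Combined with $|q\eta| \le \frac{1}{3} + O(\epsilon)$ and $|l| \le \frac{p}{6}(1 + O(\epsilon))$, this inequality forces $r < \frac{p}{2}(1 + O(\epsilon))$, whence $|h - cp| \le p + r + |l| < \frac{5p}{3} < 2^t$.

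Once $d = h - cp$ is established, the final addition yields $d + l = (h - cp) + l = xy - cp$, an integer of magnitude at most $2p - 1 < 2^t$, hence exactly representable; therefore $e = \fl(d + l) = xy - cp$. Since $xy - cp \in \{r, r - p, r + p\}$ lies in one of $[0, p)$, $[-p, 0)$, or $[p, 2p)$, the two conditional corrections bring $e$ into $[0, p)$ and return $xy \bmod p$, establishing correctness.
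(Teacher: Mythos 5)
Your argument is correct, and its core is the same as the paper's: split $xy=h+l$ exactly via the \fma-based error-free transformation, write the Euclidean division $xy=qp+r$, bound $b-q$, and conclude $c=\floor{b}\in\{q-1,q,q+1\}$. The bookkeeping differs mildly: the paper writes $h=xy(1+\eta_1)$ and accumulates three relative errors into a single factor bounded by $3\epsilon$ (using $xy\le\frac{2^{t-1}}{3}p$ to get $3\epsilon j\le 1/2$), whereas you keep $h=xy-l$ exact, retain only the two relative errors from $\invP$ and the multiplication (so $|\eta|\le 2\epsilon$, and $|q\eta|\le 1/3$), and pay for it with the additive term $|l|/p\le 1/6$; both routes land $b$ in $(q-1,q+2)$. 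Where you genuinely go beyond the paper's written proof is the tail: the paper stops at $c\in\{j-1,j,j+1\}$ and leaves implicit the exactness of $d=\fma(-c,p,h)$, of $e=\fl(d+l)$, and of the final corrections, while you verify these explicitly. In particular your treatment of the case $c=q-1$ is a real addition: the crude bound $|h-cp|\le 2p-1+|l|$ can exceed $2^t$ when $p$ is close to $2^{t-1}$, and your observation that $c=q-1$ forces $b<q$ and hence $r\lesssim p/2$ (so $|h-cp|<2^t$ and the \fma result is exact) closes that gap cleanly; the remaining claims ($d+l=xy-cp$ is an integer of magnitude at most $2p-1<2^t$, and the two conditional corrections land in $[0,p)$) are routine and correct. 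So the proposal is valid and, if anything, more complete than the proof given in the paper.
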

\begin{proof}
  Using error-free transformation and $\fma$, it is shown in
  \cite{Nievergelt2003ScalarFM,Ogita2005AccurateSum} that $h + l = xy$ with $\abs{l} \leq \epsilon \abs{xy}$.
  As $xy \le 2^{t-1}p$, it follows that $\abs{l} \leq p / 2$.
  By definition of $h$ and $b$, we have $h = xy (1 + \eta_1)$ and $b = (h/p)(1 + \eta_2)(1 + \eta_3)$
  with $\abs{\eta_1}, \abs{\eta_2}, \abs{\eta_3} \leq \epsilon / (1 + \epsilon)$ so that $h \leq (1 + \epsilon / (1 + \epsilon))xy$
  and $b \leq (1 + \eta_2 + \eta_3 + \eta_2 \eta_3)(h/p)$.
  As a consequence, $b \leq (1 + 3\epsilon) xy / p < 2^t$ so that $b$ is representable
  with a floating-point number and finally $c = \floor{b}$.

  Let us now write down the Euclidean division of $xy$ by $p$. By definition there exist some integers $j$ and $r$ such that $xy = jp + r$ with $0 \leq r < p$.
  It follows that $(1 + \eta_1) xy = (1 + \eta_1)jp + (1 + \eta_1) r$ and so
  $h = (1 + \eta_1)jp + (1 + \eta_1) r$. This can be written as
  $h / p = (1 + \eta_1)j + (1 + \eta_1) r / p$. 

  We then have that $b = (1 + \beta) j + (1 + \beta) r / p$ with $\abs{\beta} = \abs{\eta_1 + \eta_2 + \eta_3 + \eta_1\eta_2 + \eta_2\eta_3 + \eta_1\eta_3 + \eta_1\eta_2\eta_3}\leq 3\epsilon \frac{(1 + 3\epsilon^2 + (7/3)\epsilon^3)}{(1+\epsilon)^3} \leq 3 \epsilon$ which can be written as
  \[
    b = j + \underbrace{j \beta + (r/p)(1 + \beta)}_{\alpha}.
  \]
  We can deduce that $\alpha \leq 3\epsilon j + (r/p) (1 + 3\epsilon)$. As $r < p$
  and $j \leq xy/p \leq 2^{t-2}$
  then $3\epsilon j \leq 1 / 2$ and so $\alpha \leq 3/2 + 3\epsilon < 2$
  since $t \geq 3$ by assumption. Moreover $\alpha \geq - 3\epsilon j + (r/p)(1 - 3\epsilon)$
  and so similarly $\alpha \geq -1/2 - 3\epsilon > -1$.

  We can conclude that $b$ belongs to the interval $\left]j-1, j+2 \right[$
  and so\\
  $c = \floor{b} \in \left\{ j-1, j, j+1 \right\}$.
\end{proof}

\subsection{Block matrix product}
Once we have defined a modulo operator using floating-point arithmetic, modular matrix multiplication
can be naively implemented by simply performing a reduction after each floating-point operation to ensure the size
of the integers remain bounded: given $A\in\F^{m\times k}$ and $B\in\F^{k\times n}$, $C=AB\in\F^{m\times n}$ can be computed as
\begin{equation}
  C \gets C + (a_j b_j^T \bmod p) \bmod p, \quad j=1\colon k,
\end{equation}
where $a_j$ is the $j$th column of $A$ and $b_j^T$ is the $j$th row of $B$.

This approach is however extremely inefficient since it requires as many
reductions as floating-point operations. The number of reductions can be reduced by computing instead
\begin{equation}\label{eq.blockProduct}
  C \gets C + (A_j B_j \bmod p) \bmod p, \quad j=1\colon \ceil{k/\lambda},
\end{equation}
where $A_j\in\F^{m\times \lambda}$ and $B_j\in\F^{\lambda\times n}$ are block-columns of $A$ and block-rows of $B$, respectively,
and where $\lambda$ is a block size that controls how often the reductions are performed.
When choosing the value of $\lambda$ we must ensure that the intermediate
computations do not reach the range at which integers are approximated when
written as a floating-point (numbers $x$ with exponent $e$ strictly greater
than $t$ such that $x \not\equiv 0 \bmod{2^{e-t}}$). Assuming that the
coefficients of $A$ and $B$ are
in $\F_p$ (that is, they are already reduced modulo $p$), then the coefficients of $A_jB_j$ are bounded by $\lambda(p-1)^2$ and so
it suffices to take $\lambda = \floor{2^t/(p-1)^2}$~\cite{DumasGiorgiPernet2008}.

To perform the inner reduction in \eqref{eq.blockProduct},
the result of $A_jB_j$ must be stored in a temporary workspace.
To avoid this additional workspace, one can remove this inner reduction
provided that the coefficients of $C+A_jB_j$ remain representable at all steps $j$ of the computation.
Then \eqref{eq.blockProduct} becomes
\begin{equation}
  C \gets C + A_j B_j \bmod p, \quad j=1\colon \ceil{k/\lambda},
\end{equation}
\Cref{alg:blockProduct} implements this latter approach.

\begin{algorithm2e}[h]
  \caption{Block matrix product over $\F_p$}\label{alg:blockProduct}

  \DontPrintSemicolon{}
  \SetKwInOut{Input}{Input}\SetKwInOut{Output}{Output}

  \Input{$A\in \F^{m\times k},\, B\in \F^{k\times n}, C\in \F_p^{m\times n}$,
  and a block size $\lambda$ satisfying \cref{prop:blockProduct}.}
  \Output{$C = C + AB \bmod p \in \F_p^{m\times n}$.}

    \For{$j = 1$ \KwTo{} $\ceil{k/\lambda}$}{%
        $C = C + A_j B_j$ \quad \tcp{$A_j$, $B_j$ submatrices of size $m\times\lambda$ and $\lambda\times n$}
        $C = C \bmod{p}$ \quad\ \tcp{Using \Cref{alg:fpModularReductionFMA}}
    }
\end{algorithm2e}

Computationally, \Cref{alg:blockProduct} is attractive because it mainly relies
on the efficient matrix products $A_jB_j$. Indeed, it performs $2mkn$ floating-point operations (flops) for the matrix products
and only $mn\ceil{k/\lambda}$ reductions, whose cost is thus negligible for a
sufficiently large block size $\lambda$. It is therefore crucial to determine the largest possible $\lambda$
such that the algorithm remains correct.

\begin{proposition}\label{prop:blockProduct}
  \Cref{alg:blockProduct} is correct for input matrices $A$, $B$, a prime number $p<2^{t-1}$, and a block size $\lambda$ such that
  \begin{equation}\label{eq.blockProductCondition}
  \lambda\, \max(A) \max(B) + p-1 \le 2^t,
  \end{equation}
  where the operator $\max(\cdot)$ returns the maximum coefficient of a matrix.
\end{proposition}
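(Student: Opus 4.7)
\medskip
\noindent\textbf{Proof plan.}
My plan is to argue by induction on the loop index $j$, maintaining the invariant that at the end of iteration $j$ the computed matrix equals $C_{\mathrm{init}} + \sum_{i=1}^{j} A_i B_i \bmod p$ exactly and has all coefficients in $\F_p$, where $C_{\mathrm{init}}$ denotes the input value of $C$. The base case $j=0$ holds by hypothesis, and once the induction is closed after $\lceil k/\lambda\rceil$ iterations, $\sum_i A_i B_i = AB$, which yields the claim.

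For the inductive step the delicate point is the exactness of the accumulation $C \gets C + A_j B_j$. Under the inductive hypothesis $C$ has entries in $\{0,\ldots,p-1\}$, and each entry of the true result is a nonnegative integer bounded by $(p-1) + \lambda\,\max(A)\max(B)$, which by hypothesis \eqref{eq.blockProductCondition} is at most $2^t$. The BLAS GEMM call evaluates each output entry as a sequence of FMAs $c \gets \fma(a_{i\ell}, b_{\ell j}, c)$ applied to nonnegative operands; hence every partial sum is itself a nonnegative integer dominated by the final sum and therefore $\le 2^t$. Since all integers in $[0,2^t]$ lie in $\F$ and the FMA is correctly rounded, each step returns its exact value, so the accumulation is performed without any rounding error. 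I expect this to be the main obstacle, namely verifying exactness at \emph{every} intermediate stage of the FMA chain rather than only for the final value; the key to unlocking it is the nonnegativity of $A_j$ and $B_j$, which forces the partial sums to be monotone and thus uniformly controlled by the overall bound.

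It remains to justify that the subsequent reduction is correct, which I would do by applying \Cref{prop:fpModularReductionFMA} entrywise. Its hypotheses require $4 \le p < 2^{t-1}$ and that each input $x$ satisfies $x \le 2^{t-2} p$: the conditions on $p$ follow from the standing assumption $p \ge 5$ together with the hypothesis $p < 2^{t-1}$, while $x \le 2^t = 4 \cdot 2^{t-2} \le 2^{t-2} p$ holds because $p \ge 5 > 4$. Thus each entry is reduced exactly, which restores the invariant $C \in \F_p^{m\times n}$ and preserves the mathematical value modulo $p$, closing the induction.
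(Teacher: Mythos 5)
Your proof is correct and follows essentially the same route as the paper's: bound each entry of $C+A_jB_j$ by $\lambda\,\max(A)\max(B)+p-1\le 2^t$ to guarantee exact accumulation, then invoke the reduction result (\Cref{prop:fpModularReductionFMA}) entrywise. You merely spell out details the paper leaves implicit, namely the exactness of the intermediate partial sums in the FMA chain and the verification $x\le 2^t\le 2^{t-2}p$ using $p\ge 5$.
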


\begin{proof}
  At each iteration of the \verb|for| loop, each coefficient of $A_jB_j$ is
  computed as the dot product of vectors of size at most
  $\lambda$ and is thus bounded by
  $\lambda\max(A)\max(B)$. Then, it is added to a coefficient of $C$,
  which is
  bounded by $p-1$ since $C$ is reduced modulo $p$ at each iteration.
  The result is thus exact as long as the coefficients of $C+A_j B_j$ and $p$
  match the conditions of \Cref{alg:fpModularReductionFMA} on $x$ and $p$, that is,
 as long as \eqref{eq.blockProductCondition} holds and $p < 2^{t-1}$.
\end{proof}

\Cref{alg:blockProduct} is classically
used with $C = 0 \in \F_p^{m\times n}$
and with $A$ and $B$ with coefficients in $\F_p$~\cite{DumasGiorgiPernet2008}. In this case, since
$\max(A)$ and $\max(B)$ are both bounded by $p-1$,
\eqref{eq.blockProductCondition} rewrites as
$\lambda {(p-1)}^2 + p - 1 \le 2^t$, which holds for
\begin{equation}\label{eq.lambda-sw}
  \lambda = \floor{\frac{2^t - p + 1}{{(p - 1)}^2}}.
\end{equation}

This provides a sufficient condition on the maximum size of $p$.

\begin{corollary}
  Calling \Cref{alg:blockProduct} on $A\in\F_p^{m\times k}$,
  $B\in\F_p^{k\times n}$, $C=0\in\F_p^{m\times n}$ and block-size
  $\lambda$ satisfying \eqref{eq.lambda-sw} correctly returns
  $A B\in\F_p^{m\times n}$ if
  \begin{equation}\label{eq.condition-sw}
    p \le 2^{t/2}.
  \end{equation}
\end{corollary}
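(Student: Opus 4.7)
My plan is to simply reduce to Proposition~\ref{prop:blockProduct}, whose hypotheses are (i) $p<2^{t-1}$ and (ii) $\lambda\max(A)\max(B)+p-1\le 2^t$. Under the assumption $p\le 2^{t/2}$ (with $t\ge 3$), the first hypothesis is automatic since $2^{t/2}\le 2^{t-1}$. For the second, using the fact that $A$ and $B$ have entries in $\F_p$, both $\max(A)$ and $\max(B)$ are bounded by $p-1$, so condition (ii) becomes $\lambda(p-1)^2+p-1\le 2^t$, which is precisely satisfied by the choice of $\lambda$ in \eqref{eq.lambda-sw} by construction of the floor.

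The only real point that requires verification is that $\lambda\ge 1$, since otherwise the block decomposition is vacuous and the algorithm cannot proceed. For this, I would rewrite the condition $\floor{(2^t-p+1)/(p-1)^2}\ge 1$ as $2^t-p+1\ge (p-1)^2$, which after expansion becomes $2^t\ge p^2-p=p(p-1)$. This clearly holds whenever $p\le 2^{t/2}$, as then $p(p-1)<p^2\le 2^t$.

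Putting these observations together, the hypotheses of Proposition~\ref{prop:blockProduct} are all satisfied, and the algorithm returns $C=0+AB\bmod p=AB\in\F_p^{m\times n}$, which concludes the proof. I do not anticipate any real obstacle here: the corollary is essentially a direct rewriting of the sufficient condition \eqref{eq.blockProductCondition} in the specific case where $A,B$ are already reduced and $C=0$, and the bound $p\le 2^{t/2}$ is exactly what makes the block size $\lambda$ nontrivial.
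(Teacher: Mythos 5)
Your proof is correct and follows essentially the same route as the paper: the paper's own argument also reduces to checking $\lambda \ge 1$, i.e.\ $(p-1)^2 + p - 1 = p(p-1) \le 2^t$, which $p \le 2^{t/2}$ guarantees. You simply spell out the verification of the hypotheses of \Cref{prop:blockProduct} (including $p < 2^{t-1}$, which indeed follows from $p \le 2^{t/2}$ and $t \ge 3$) a bit more explicitly than the paper does.
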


\begin{proof}
  The result is correct if $\lambda \ge 1$, that is, if  $(p-1)^2 + p - 1 \le 2^t$.
  Since $(p-1)^2+p-1 = p(p-1)$, \eqref{eq.condition-sw} is certainly sufficient.
\end{proof}

With double precision arithmetic ($t=53$), \Cref{alg:blockProduct} can thus only
handle prime numbers less than about $2^{26.5}$. Moreover, for prime numbers approaching this limit,
the algorithm becomes quite inefficient since it must use a small block size $\lambda$.

In the next section we propose a new approach based on multiword arithmetic
that can handle much larger primes.

\section{New multiword algorithms}
\label{s:mw}

To overcome the limitations of the existing block matrix product algorithm,
we propose instead to rely on multiword arithmetic, which
consists in splitting the numbers into smaller parts, called \emph{words}, which
can be stored with a smaller precision (with fewer bits).
Multiword matrix multiplication algorithms are well studied in inexact
floating-point arithmetic, and have generated a renewed interest due to their ability
to emulate high precision arithmetic while exploiting efficient mixed precision GPU hardware~\cite{fhlm23,ooyo22,ooy24,uoi25,adfm25}.
However, to the best of our knowledge, using multiword arithmetic
for exact modular integer computations (based on floating-point arithmetic
and BLAS matrix operations) is a new idea, which we develop in the rest of this section.

\subsection{Multiword matrix decomposition}

Given $M \in\F_p^{m\times n}$ with coefficients bounded by $p$,
we seek to decompose it as the unevaluated sum of $u$ words $M_i$:
\[
M = \sum_{i=0}^{u-1} \alpha^i M_i,
\]
where to balance the coefficients of $M_i$ and make them as small as possible, we should
take $\alpha \approx p^{1/u}$. If $s$ bits are required to store the coefficients of $M$,
about $s/u$ bits should be sufficient to store those of $M_i$.
\Cref{alg:MWDecomposition} describes a method to obtain such a decomposition using
only floating-point arithmetic.

\begin{algorithm2e}[h]
  \caption{Multiword matrix decomposition}\label{alg:MWDecomposition}
  \SetSideCommentLeft{}
  \DontPrintSemicolon{}
  \SetAlgoLined{}
  \SetKwInOut{Input}{Input}\SetKwInOut{Output}{Output}
  \SetKwProg{Fn}{}{}{}
  \SetKwFunction{MWDecomposition}{\texttt{MW-Decomposition}}

  \Input{$M\in \F_p^{m\times n}$ and the number of words $u$.}
  \Output{$\alpha\in\N$ and $M_{0}, \ldots, M_{u-1} \in\F_p^{m\times n}$ such that $M = \sum_{i=0}^{u-1} \alpha^i M_i$.}
  $\alpha = \ceil{p^{1/u}}$\;
  $T = M$\;
  \For{$i = 0$ \KwTo{} $u-2$}{%
    $R = \floor{\frac{T}{\alpha}}$ \quad\qquad\tcp*{Quotient}
    $M_{i} = T - \alpha R$ \quad\tcp*{Remainder}
    $T = R$\;
  }
  $M_{u-1} = T$\;
\end{algorithm2e}

Under reasonable assumptions, this method produces words $M_i$ with coefficients no larger than $\alpha$,
even using floating-point arithmetic.
We begin by proving the following lemma.

\begin{lemma}~\label{prop.exactFpDiv}
  Let $0 \leq a < 2^t$ and $1\leq b$ two integers. Then computing the floor of the quotient of $a$ by $b$ in floating-point arithmetic
  is computing exactly the integer part of the quotient: $\floor{\fl(a/b)} = \floor{a/b}$
\end{lemma}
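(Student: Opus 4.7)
The plan is to reduce the statement to whether $b$ divides $a$. Setting $q = \floor{a/b}$ and $r = a - qb \in \{0,\ldots,b-1\}$, the goal $\floor{\fl(a/b)} = q$ is equivalent to showing $q \le \fl(a/b) < q+1$.

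For the easy case $r = 0$, the exact quotient $a/b = q$ is an integer with $0 \le q \le a < 2^t$ (using $b \ge 1$), hence exactly representable as a floating-point number, and $\fl(a/b) = q$ follows directly from~\eqref{eq:stdModel}.

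For $r \ge 1$, the value $a/b = q + r/b$ lies strictly inside $(q, q+1)$, at distance $r/b \ge 1/b$ from $q$ and at distance $(b-r)/b \ge 1/b$ from $q+1$, since both $r$ and $b-r$ are positive integers. I would then apply~\eqref{eq:stdModel} to the floating-point division, which yields
\[
  \abs{\fl(a/b) - a/b} \le \frac{a}{b} \cdot \frac{\epsilon}{1+\epsilon} \le \frac{a}{b} \cdot 2^{-t}.
\]
The pivotal step is that the hypothesis $a < 2^t$ is strict ($a$ is an integer, so $a \le 2^t - 1$), which upgrades the bound above to $\abs{\fl(a/b) - a/b} < 1/b$. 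This strict error bound is smaller than both distances $r/b$ and $(b-r)/b$, forcing $\fl(a/b) \in (q, q+1)$, from which $\floor{\fl(a/b)} = q$ follows.

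The only subtle point, and what I would flag as the main obstacle, is maintaining \emph{strict} inequalities throughout: the conclusion relies on the rounding error being strictly less than $1/b$, not merely at most $1/b$. This strictness comes for free from the integer hypothesis $a < 2^t$, so no finer case analysis on the exponent range of $a/b$ or on the exact floating-point spacing around $q$ and $q+1$ is required.
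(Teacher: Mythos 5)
Your proof is correct and follows essentially the same route as the paper's: bound the relative error of the floating-point division strictly by $1/b$ using $a<2^t$, note that when $b\nmid a$ the exact quotient lies at distance at least $1/b$ from the neighbouring integers, and handle the exact-division case ($r=0$) separately via representability of the integer quotient. Your two-sided distance phrasing and the $r=0$ versus $r\ge 1$ case split mirror the paper's argument (its \eqref{eq.factorPart}--\eqref{eq.fracPart} combination and its ``$b$ divides $a$'' remark), so there is nothing to add.
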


\begin{proof}
We first prove that the computed floor is not greater than the exact floor when the quotient produces a positive error.
\begin{equation}~\label{eq.factorPart}
  \fl\left(\frac{a}{b}\right) = \frac{a}{b} (1 + \eta) \leq \frac{a}{b} + \frac{\epsilon a}{(1+\epsilon)b} < \frac{a}{b} + \frac{2^t 2^{-t}}{b} < \frac{a}{b} + \frac{1}{b}.
\end{equation}

The fractional part of an integer quotient by $b$ may not be greater than $\frac{b-1}{b}$.
Indeed, let $r$ be the remainder of $a$ (integer) by $b$. It is at most equal to $b-1$.
Since $a= \floor{q} b + r$, the quotient is equal to:
\begin{equation}~\label{eq.fracPart}
q = \frac{a}{b} = \floor{q} + \frac{r}{b} \leq \floor{q} + \frac{b -1}{b}.
\end{equation}

We get the following upper bound by adding terms from both \cref{eq.fracPart} and \cref{eq.factorPart}:
\[
\frac{a}{b} (1 + \eta) < \floor{q} + \frac{b - 1}{b} + \frac{1}{b} = \floor{q} + 1.
\]

Similarly, if the quotient produces a negative error and $b$ does not divide $a$, we have:
\[
\frac{a}{b} (1 - \eta) > \floor{q} + \frac{1}{b} - \frac{1}{b} = \floor{q}.
\]
If $b$ divides $a$, no error is produced.

We have shown that no matter the error, the floating-point quotient is bounded by the exact quotient:
\[
  \floor{q} \leq \fl\left(\frac{a}{b}\right) < \floor{q} + 1.
\]
Hence, the floor of the floating-point quotient is equal to the exact floor of the quotient.
\end{proof}

We now prove the exactness of the decomposition.

\begin{proposition}\label{prop:decompValidity}
  Assuming $1<p<2^t$, \Cref{alg:MWDecomposition} computes exactly the decomposition
  \begin{equation}\label{eq.MWDecomp}
    M = \sum_{i=0}^{u-1} \alpha^i M_i
  \end{equation}
  where each matrix $M_i$ has nonnegative coefficients bounded by $\alpha=\ceil{p^{1/u}}$.
\end{proposition}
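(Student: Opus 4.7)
The plan is to argue coefficient-wise: since every operation in \Cref{alg:MWDecomposition} acts pointwise on matrix entries, it suffices to fix a single scalar $m\in\F_p$, $0\le m<p$, and prove that the $u$ scalars $m_0,\ldots,m_{u-1}$ produced from it are exactly the base-$\alpha$ digits of $m$. Once that is done, the decomposition $m = \sum_{i=0}^{u-1}\alpha^i m_i$ is automatic, $m_i<\alpha$ for $i<u-1$ holds because $m_i$ is defined as a remainder modulo $\alpha$, and the only nontrivial bound to verify is on $m_{u-1}$.

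First I would verify that every floating-point operation inside the loop is performed exactly. Let $T^{(i)}$ denote the value of $T$ at the start of iteration $i$, so $T^{(0)}=m$ and, by induction, $T^{(i)}\le m \le p-1 < 2^t$. The floor division $R=\floor{\fl(T^{(i)}/\alpha)}$ is then exact by \Cref{prop.exactFpDiv}, since $0\le T^{(i)}<2^t$ and $\alpha=\ceil{p^{1/u}}\ge 2$ (using $p\ge 5$); so mathematically $R=\floor{T^{(i)}/\alpha}$. The product $\alpha R$ is a nonnegative integer bounded by $T^{(i)}<2^t$, hence representable and computed exactly; and the subtraction $T^{(i)}-\alpha R$ yields an integer in $[0,\alpha)\subset[0,2^t)$, so it too is exact. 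Therefore at each iteration $M_i = T^{(i)}\bmod\alpha$ and $T^{(i+1)}=\floor{T^{(i)}/\alpha}$ hold as exact integer identities.

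A straightforward induction on $i$ then gives $T^{(i)}=\floor{m/\alpha^i}$ and $M_i=\floor{m/\alpha^i}\bmod\alpha$, which are precisely the base-$\alpha$ digits of $m$, so $m = \sum_{i=0}^{u-1}\alpha^i m_i$ and $0\le m_i<\alpha$ for $i=0,\ldots,u-2$. It remains to bound $M_{u-1}=T^{(u-1)}=\floor{m/\alpha^{u-1}}$: from $\alpha\ge p^{1/u}$ one gets $\alpha^u\ge p>m$, whence $m/\alpha^{u-1}<\alpha$ and $M_{u-1}\le\alpha-1<\alpha$. The only mild obstacle is the bookkeeping in paragraph two, namely checking that the multiplication $\alpha R$ and the subtraction $T-\alpha R$ never leave the exactly representable integer range; but since both are dominated by $T^{(i)}\le p-1<2^t$, this reduces to a single uniform bound and does not require a separate analysis per iteration.
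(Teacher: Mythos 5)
Your proof is correct and follows essentially the same route as the paper: reduce to entrywise integer arithmetic, use \Cref{prop.exactFpDiv} to get the exact floor quotient, check that the product $\alpha R$ and the subtraction stay within the representable integer range so every step is exact, and bound the last word via $\alpha^u \ge p$. The only difference is cosmetic packaging — you state the closed form $T^{(i)}=\floor{m/\alpha^i}$ and invoke base-$\alpha$ digits, whereas the paper telescopes the recurrence $T_{i-1}=M_i+\alpha T_i$ — which does not change the substance of the argument.
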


\begin{proof}
  In addition to $M_i$, we denote as $T_i$ and $R_i$ the values that
  $T$ and $R$ take at the end of iteration $i$ of the \verb|for| loop,
  with the notation $T_{-1} := M$. Our goal is to bound the
  coefficients of these matrices and check that no overflow occurs
  during any step
  of the computation. Note first that since $p>1$ we have $\alpha \ge
  2$.
  At any step, $T_i$ and $R_i$ have integer coefficients. Moreover, the coefficients of $T_{-1}$
  are all bounded by $p-1 < 2^t$. By definition, $\alpha = \ceil{p^{1/u}}$ is an integer
  so we can apply \cref{prop.exactFpDiv} to the computation of $R_i$, proving that
  it is computed exactly:
   \begin{equation}\label{eq.Ri-fpmodel}
     R_i = \floor{\fl(\frac{T_{i-1}}{\alpha})} =  \floor{\frac{T_{i-1}}{\alpha}}.
   \end{equation}


   Since $\alpha \ge 1$,
   we have $R_i < R_{i-1}$, which means that the coefficients of $R$
   decrease throughout the iterations. Thus
   for any $i$ we have
   \[
     \alpha R_i \le \alpha R_0 \le T_{-1} \le (p-1),
   \]
   which shows that the product $\alpha R_i$ does not overflow and hence is exact.

   Let us now also bound $M_i = T_{i-1} - \alpha R_i$ from above.
   By \eqref{eq.Ri-fpmodel}, we have
   $R_i \ge \frac{T_{i-1}}{\alpha} - 1$ and so,
   for $i=0 \colon u-2$,  $M_i  \le \alpha$.
   It only remains to bound the last word $M_{u-1} = T_{u-2} = R_{u-2}$
   from above.
   Reusing \eqref{eq.Ri-fpmodel} we obtain the recurrence relation
   \[
     R_i \le \frac{T_{i-1}}{\alpha} = \frac{1}{\alpha} R_{i-1}
   \]
   which yields
   \[
     R_i \le \frac{R_0}{\alpha^i}
     \le \frac{T_{-1}}{\alpha^{i+1}}
     \le \frac{p}{\alpha^{i+1}}.
   \]
   Using $\alpha = \ceil{p^{1/u}} \ge p^{1/u}$, we therefore obtain
   \[
     M_{u-1} = R_{u-2} \le p^{-(u-1)/u} p = p^{1/u} \le \alpha.
   \]

   We have therefore shown that no overflow occurs during the
   computation as long as $p<2^t$.
  To conclude it suffices to observe that $M_i = T_{i-1} - \alpha R_i$
  yields the recurrence relation
  $T_{i-1} = M_i + \alpha T_i$ for $i=0\colon u-2$. Hence
  \[
    T_{-1} = \sum_{i=0}^{u-2} \alpha^i M_i + \alpha^{u-1}T_{u-2}
  \]
  which yields the desired decomposition $M = \sum_{i=0}^{u-1} \alpha^i M_i$
  since $T_{-1} = M$ and $T_{u-2} = M_{u-1}$.
\end{proof}

\subsection{Multiword matrix multiplication}

We now explain how to use the multiword decomposition to compute the product
$C=AB \bmod p$ with a much less restrictive condition on the size of $p$ than
with the single word approach.

We consider a general setting where the decompositions of $A$ and $B$ can use possibly different numbers of words, denoted as $u$ and $v$ respectively.
We thus compute the decompositions
\[
  A = \sum_{i=0}^{u-1} \alpha^i A_i, \qquad
  B = \sum_{j=0}^{v-1} \beta^j B_j,
\]
where $\alpha = \ceil{p^{1/u}}$ and $\beta = \ceil{p^{1/v}}$, and where the coefficients of the words $A_i$ and $B_j$
are bounded by $\alpha$ and $\beta$ respectively.

The product $AB$ is then given as
\[
AB = \sum_{i=0}^{u-1} \sum_{j=0}^{v-1}  \alpha^i \beta^j A_iB_j.
\]
Therefore one approach to compute $C=AB\bmod p$ would be to compute for each
pair $(i,j)$ the product $A_i B_j\bmod p$ using the block matrix product in
\Cref{alg:blockProduct}, storing the result in a temporary workspace $T$, scaling all coefficients of
$T$ by $\gamma_{ij} = \alpha^i\beta^j$ using the modular product reduction in \Cref{alg:Joris}, and finally
adding the result $\gamma_{ij} T\bmod p$ in $C$.

\Cref{alg:UVMW} describes a slightly more involved approach that does not require any temporary workspace. The idea is to add the result
of $A_iB_j \bmod p$ directly
into $C$ before scaling by $\gamma_{ij}$. This is made possible by scaling $C$ by $\gamma_{ij}^{-1}$ beforehand, since
$\gamma_{ij} (\gamma_{ij}^{-1}C + A_i B_j) = C + \gamma_{ij} A_i B_j$. This extra scaling has a
negligible cost with respect to the matrix products, and avoids the need for
any additional workspace. An important detail is that we do not actually
compute $\gamma_{ij}^{-1}$, which is not an integer and thus not necessarily
representable as a floating-point number, but rather $\delta_{ij} = \gamma_{ij}^{-1} \bmod
p$, the modular inverse of $\gamma_{ij}$ (which is an integer less than $p$ and thus in $\F_p$).
As a remark, note that the use of the modular inverse requires $p$ to be prime, since it might not exist otherwise.
Therefore, if one wishes to use this multiword product with a composite $p$,
the temporary workspace approach described above should be used.

\begin{algorithm2e}[h]
   \caption{Multiword matrix product}\label{alg:UVMW}
   \DontPrintSemicolon{}
\SetKwProg{Fn}{}{}{}
\SetKwFunction{MWFfMatMul}{\texttt{FFMatMulMW}}
\SetKwFunction{ffMatMul}{\texttt{FFMatMulSW}}
\SetKwFunction{XGCD}{\texttt{ModInv}}
\SetKwFunction{MWDecomposition}{\texttt{MW-Decomposition}}
  \SetKwFunction{MMF}{mul mod fma}

\SetKwInOut{Input}{Input}\SetKwInOut{Output}{Output}
  \Input{$A\in \F_p^{m\times k},\, B\in \F_p^{k\times n},\, u,\, v,\, \lambda$.}
  \Output{$C = AB \bmod p\in \F_p^{m\times n}$.}
  $\alpha = \ceil{p^{1/u}}$ and $\beta = \ceil{p^{1/v}}$\;
  Decompose $A = \sum_{i=0}^{u-1} \alpha^i A_i$ \quad  \tcp{Using \Cref{alg:MWDecomposition}}
  Decompose $B = \sum_{j=0}^{v-1} \beta^j B_j$ \!\quad  \tcp{Using \Cref{alg:MWDecomposition}}
  Initialize $C = 0$\;
  \For{$i = 0$ \KwTo{} $u-1$}{%
    \For{$j = 0$ \KwTo{} $v-1$}{%
      $\gamma=\alpha^i \beta^j \bmod p$  \quad \tcp{Using \Cref{alg:Joris}}
      $\delta = \gamma^{-1} \bmod p$ \ \quad \tcp{Modular inverse}
      $C = \delta C \bmod p$ \ \quad \tcp{Using \Cref{alg:Joris}}
      $C = C + A_i B_j$ \ \quad \tcp{Using \Cref{alg:blockProduct} with block size $\lambda$}
      $C = \gamma C \bmod p$ \ \quad \tcp{Using \Cref{alg:Joris}}
    }
  }
  \Return{C}\;

\end{algorithm2e}

Before discussing the condition on the size of $p$ for this multiword product to be correct, we first describe a variant thereof
in \Cref{alg:UVMW-concat}. This variant concatenates the matrices
$B_j$ in order to compute the products $A_i B_j$, for a fixed $i$ and for all $j=0\colon v-1$,
as a single contiguous matrix product $A_i [B_0 \ldots B_{v-1}]$. This is potentially more efficient than computing each $A_iB_j$ product independently because
the concatenated product has a larger rightmost dimension ($nv$ instead of $n$) and thus a higher arithmetic intensity when $n$ is small.
Note that a variant where we concatenate the $A_i$ matrices
instead of the $B_j$ ones is also possible; in general one should try
to maximize the smallest of the two outer dimensions of the product, hence concatenating the $B_j$ matrices
when $n < m$ and the $A_i$ ones when $n > m$.

\begin{algorithm2e}[h]
  \caption{Multiword matrix product with concatenation}\label{alg:UVMW-concat}
  \DontPrintSemicolon{}

\SetKwInOut{Input}{Input}\SetKwInOut{Output}{Output}
  \Input{$A\in \F_p^{m\times k},\, B\in \F_p^{k\times n},\, u,\, v,\, \lambda$.}
  \Output{$C = AB \bmod p\in \F_p^{m\times n}$.}
  Compute $\alpha = \ceil{p^{1/u}}$ and $\beta = \ceil{p^{1/v}}$\;
  Decompose $A = \sum_{i=0}^{u-1} \alpha^i A_i$ \quad  \tcp{Using \Cref{alg:MWDecomposition}}
  Decompose $B = \sum_{j=0}^{v-1} \beta^j B_j$ \!\quad  \tcp{Using \Cref{alg:MWDecomposition}}
  Initialize $C = 0$\;
  \For{$i = 0$ \KwTo{} $u-1$}{%
    $[T_0 \ldots T_{v-1}] = A_i [B_0 \ldots B_{v-1}]$ \quad\tcp{Using \Cref{alg:blockProduct} with block size $\lambda$}
    \For{$j = 0$ \KwTo{} $v-1$}{%
      $\gamma=\alpha^i \beta^j \bmod p$ \ \qquad \tcp{Using \Cref{alg:Joris}}
      $T_j = \gamma T_j \bmod p$ \ \qquad \tcp{Using \Cref{alg:Joris}}
      $C =  C + T_j \bmod p$ \quad \tcp{Using \Cref{alg:fpModularReductionFMA}}
    }
  }
\end{algorithm2e}

\begin{proposition}\label{prop:UVMW}
\Cref{alg:UVMW} (and its concatenated variant \Cref{alg:UVMW-concat}) computes exactly $C=AB \bmod p$ under the conditions
$p < 2^{t-1}$ and
\begin{equation}\label{eq.cAcB}
 \lambda\alpha\beta + p-1 \le 2^t.
\end{equation}
\end{proposition}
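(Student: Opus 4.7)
The plan is to establish correctness in two layers: an algebraic identity modulo $p$ reconstructing $AB$ from the scaled word products, and a sweep of precondition checks ensuring every floating-point subroutine receives admissible inputs.

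For the algebraic layer I would first invoke \Cref{prop:decompValidity} to obtain the exact decompositions $A=\sum_{i=0}^{u-1}\alpha^iA_i$ and $B=\sum_{j=0}^{v-1}\beta^jB_j$ together with the coefficient bounds $\max(A_i)\le\alpha$ and $\max(B_j)\le\beta$. Expanding $AB$ and reducing modulo $p$ immediately gives
\[
AB\bmod p = \sum_{i=0}^{u-1}\sum_{j=0}^{v-1}\gamma_{ij}\,A_iB_j\bmod p,\qquad \gamma_{ij}:=\alpha^i\beta^j\bmod p.
\]
The per-iteration update implemented by the inner loop is $C\leftarrow(\gamma_{ij}((\delta_{ij}C\bmod p)+A_iB_j))\bmod p$, which equals $(C+\gamma_{ij}A_iB_j)\bmod p$ as soon as $\gamma_{ij}\delta_{ij}\equiv 1\pmod p$. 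Since $p$ is prime, this reduces to checking that $p\nmid\alpha^i\beta^j$; I would handle this from $\alpha=\ceil{p^{1/u}}<p$ when $u\ge 2$, together with the observation that when $u=1$ only $i=0$ occurs so $\alpha^i$ contributes no factor, and symmetrically for $\beta$. Summing the updates from $C=0$ yields the target identity.

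The floating-point layer is an induction on the loop with the invariant that $C\in\F_p^{m\times n}$ at the start of each iteration. Under this invariant I would verify in turn that: the call to \Cref{alg:blockProduct} on $(A_i,B_j,C)$ satisfies \eqref{eq.blockProductCondition}, since $\max(A_i)\le\alpha$, $\max(B_j)\le\beta$ and the hypothesis $\lambda\alpha\beta+p-1\le 2^t$ meet the assumption of \Cref{prop:blockProduct}, so the output still lies in $\F_p^{m\times n}$; the iterated computation of $\gamma_{ij}$ and the two scalings $\delta_{ij}C\bmod p$ and $\gamma_{ij}C\bmod p$ are each modular products of operands already in $\F_p$ handled by \Cref{alg:Joris}; and \Cref{alg:UVMW-concat} follows from the same argument applied to the batched product $A_i[B_0\ldots B_{v-1}]=[A_iB_0\ldots A_iB_{v-1}]$, whose right operand still has maximum coefficient $\beta$, with the final accumulations $C\leftarrow C+T_j\bmod p$ handled by \Cref{alg:fpModularReductionFMA} on operands of magnitude at most $2(p-1)$.

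The hard step I expect is verifying the \Cref{prop:Joris} precondition in the regime where both operands can approach $p-1$: a crude bound $(p-1)^2\le\tfrac{2^{t-1}}{3}p$ suggests $p\lesssim 2^{t-1}/3$, which looks tighter than the stated $p<2^{t-1}$. Reconciling the two will require either exploiting \eqref{eq.cAcB} to derive an implicit tighter bound on $p$ from the existence of a usable block size $\lambda\ge 1$, or a sharpened analysis of \Cref{alg:Joris} specialized to inputs already reduced modulo $p$. The algebraic identity and the blockProduct precondition, by contrast, I expect to be routine once the invariant is in place.
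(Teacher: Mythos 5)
Your proposal follows essentially the same route as the paper's proof: exactness of the decompositions and the word bounds $\max(A_i)\le\alpha$, $\max(B_j)\le\beta$ from \Cref{prop:decompValidity}, the block-product condition of \Cref{prop:blockProduct} specializing to \eqref{eq.cAcB}, \Cref{alg:Joris} for the powers and the scalings, and the algebraic reconstruction, which the paper leaves implicit but you spell out correctly (including why $\gamma_{ij}$ is invertible modulo the prime $p$, via $\alpha<p$ for $u\ge2$ and $i=0$ only when $u=1$).

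Concerning the one step you leave open: the paper does not resolve it either. Its proof simply asserts that the scalings $\delta C\bmod p$ and $\gamma C\bmod p$ are exact ``since $\delta$, $\gamma$, and all the coefficients of $C$ are bounded by $p$'', without checking the hypothesis $xy\le\tfrac{2^{t-1}}{3}p$ of \Cref{prop:Joris}; as you observe, with both operands near $p-1$ that hypothesis forces roughly $p\lesssim 2^{t-1}/3$, and your first remedy does not work, since for $u=v=2$ condition \eqref{eq.cAcB} with $\lambda\ge1$ only yields $p\lesssim 2^{t-1}$. So the viable route is your second one, a sharpened analysis of \Cref{alg:Joris} for inputs already reduced modulo $p$ (or an additional conditional correction after the reduction); in any case you are not missing an ingredient that the paper supplies --- your concern points at a looseness in the paper's own appeal to \Cref{prop:Joris} for $p$ between roughly $2^{t-1}/3$ and $2^{t-1}$, not at a defect specific to your argument.
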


\begin{proof}
We need to check the exactness of all steps. By \Cref{prop:decompValidity} the multiword
decompositions obtained by \Cref{alg:MWDecomposition} are exact if $p<2^t$.
By \Cref{prop:Joris}, the computation of $\gamma = \alpha^i\beta^j \bmod p$ using \Cref{alg:Joris}
is exact if $\alpha^i \bmod{p} \le p$ and $\beta^j \bmod{p} \le p$ are reduced modulo $p$ before applying \Cref{alg:Joris}.
To compute $\delta$ efficiently, one computes it as $(\alpha^{-1})^i
(\beta^{-1})^j$. To ensure it is computed exactly, it is necessary to
perform a modular reduction at each step of modular powering.
The scalings $\delta C$ and $\gamma C$
are also exact since $\delta$, $\gamma$, and all the coefficients of $C$ are all bounded by $p$.
Finally, the condition for the block product $C = C +  A_i B_j$ to be exact using \Cref{alg:blockProduct}
is given by \eqref{eq.blockProductCondition} in \Cref{prop:blockProduct}:
\[
  \lambda\, \max(A_i) \max(B_j) + p-1 \le 2^t,
\]
which yields \eqref{eq.cAcB} since by \Cref{prop:decompValidity}
$\max(A_i) \le \alpha$ and $\max(B_j) \le \beta$.

Finally, it is easy to check that \Cref{alg:UVMW-concat} is equivalent to
\Cref{alg:UVMW} and leads to the same conditions.
\end{proof}

\Cref{prop:UVMW} provides in \eqref{eq.cAcB} a sufficient condition on the size of $p$ for the multiword product to be exact.
Neglecting the ceilings in the expressions of $\alpha$ and $\beta$, we obtain
\begin{equation}\label{eq.readableCondition}
  \lambda p^{1/u + 1/v} + p - 1\le 2^t.
\end{equation}
We use this more readable and almost correct condition to make a few comments.
\begin{itemize}
\item
Note first that by setting $u=v=1$, \eqref{eq.readableCondition} reduces to $\lambda p^2 + p \le 2^t$: with $\lambda=1$,
we thus recover the condition $p \lesssim 2^{t/2}$ of the single word algorithm.
\item
  Consider now the case where $u=v=2$. Then \eqref{eq.readableCondition} becomes $(\lambda+1)p - 1 \le 2^t$.
  For $\lambda=1$, we obtain an ideal condition since $p < 2^{t-1}$ is already required by the modular reduction
  operations (\Cref{alg:fpModularReductionFMA,alg:Joris}).
  We conclude that two words for both $A$ and $B$ suffice to handle all primes fitting on the target floating-point arithmetic.
\item
  Interestingly, using $u=1$ and $v>1$ (or the converse) still provides
  a significant improvement to the single word condition: \eqref{eq.readableCondition}
  yields $p^{(v+1)/v}+p-1 \le 2^t$ or, neglecting the $p-1$ term, $p \lesssim 2^{tv/(v+1)}$.
  Thus for $v=2$, the condition is $p\lesssim 2^{2t/3}$, for $v=3$, it is
  $p\lesssim 2^{3t/4}$, and so on. As $v$ tends to a larger and larger number of words,
  the condition tends towards the ideal $p\lesssim 2^t$.
\item
  Finally, if we set $u=2$ and $v=3$, then \eqref{eq.readableCondition} becomes $\lambda p^{5/6}
  + p - 1 \le 2^t$. While this condition does not lead to any improvement compared with $u=v=2$
  when $\lambda=1$, it does allow for using larger a block size $\lambda$ while maintaining
  the ideal condition $p<2^{t-1}$.
\end{itemize}

\subsection{Discussion on the cost of the algorithms}
\label{ss:cost}

Now that we have determined the maximum $p$ that a given pair $(u,v)$ can handle, it remains to
discuss the cost of the algorithm as function of $u$ and $v$. \Cref{alg:UVMW} performs $uv$ matrix products
of dimensions $m\times k \times n$,  hence requiring $2uvmkn$ flops. This is a factor $uv$ more than the single word product.
The multiword product also requires $uvmn(\ceil{k/\lambda}+2)$ reductions,
which is also about a factor $uv$ more than the single word one. However, a key difference is that the block size $\lambda$ is not the same:
in the single word case $\lambda \approx 2^t/p^2$ whereas in the multiword case $\lambda \approx 2^t/p^{1/u+1/v}$. Therefore the multiword product
can use a potentially much bigger block size $\lambda$, which results
in a more efficient product since it reduces the relative cost of the reductions
and also increases the arithmetic intensity of the matrix products.
As for \Cref{alg:UVMW-concat}, it performs the same flops as \Cref{alg:UVMW}, but
is potentially more efficient thanks to an increased arithmetic intensity.

Based on this analysis, we can make some predictions on which approach is the best depending on the size of $p$. We will then
check these predictions in our experiments. Throughout this discussion we
assume $u \le v$, with the understanding that the converse is also possible.
We refer to the different variants as $(u,v)$-product.

The single word $(1,1)$-product is the least expensive and so is expected to be the best choice as long as it can use a sufficiently large block size, that is,
when $p\ll 2^{t/2}$. As $p$ approaches this limit, the $(1,1)$-product
will become increasingly less efficient until it is no longer correct. Around this limit
we should therefore switch to a multiword product with the smallest possible cost,
that is, $u=1$ and $v=2$; this $(1,2)$-product should be the best
until $p$ approaches its new limit $p \ll 2^{2t/3}$. At this point, we have the choice between increasing $u$ or $v$; since \hbox{$1\times 3 < 2\times 2$},
the $(1,3)$-product performs fewer flops than the $(2,2)$-product and is therefore preferable as long as $p \ll 2^{3/4}$. At this point, we again have the choice
between the $(1,4)$-product and the $(2,2)$-product, which perform the same number of flops. Since the limit for the $(1,4)$-product, $p\ll 2^{4t/5}$, is more restrictive
than that of the $(2,2)$-product, $p\ll 2^t$, the latter may seem preferable than the former. However, when considering the concatenated variant of these algorithms,
the $(1,4)$-product increases the arithmetic intensity by a factor up to $4$, for unbalanced matrix dimensions, instead of $2$ for the $(2,2)$-product. Hence in situations where the concatenated $(2,2)$-product
remains memory bound, the concatenated $(1,4)$-product could outperform it as
long as $p\ll 2^{4t/5}$.
Finally, as mentioned before, the $(2,2)$-product will remain correct for almost all representable values of $p$,
$p\le 2^{t-1}$; however, as $p$ approaches this limit, the block size $\lambda$ will tend to $1$. Therefore, we  might expect the $(2,3)$-product,
the next least expensive variant, to become more efficient for very large $p$.

\begin{table}[h]
  \centering
  \caption{Summary of the comparison between the different $(u,v)$-product variants.\label{tab.summary}}

  \begin{tabular}{lllllllll}
    \toprule
    $(u,v)$ & $(1,1)$ & $(1,2)$ & $(1,3)$ & $(1,4)$ & $(2,2)$ & $(2,3)$ \\
    \midrule
    Normalized flops ($=uv$) & 1 & 2 & 3 & 4 & 4 & 6 \\
    Approximate limit on $p$ & $2^{t/2}$ & $2^{2t/3}$ & $2^{3t/4}$ & $2^{4t/5}$ & $2^{t-1} $ & $2^{t-1}$ \\
    Limit on $\bsz(p)$ for $t=53$ & 26 & 35 & 39 & 42 & 52 & 52 \\
    Maximum block size $\lambda$ & $2^t/p^2$ & $2^t/p^{3/2}$ & $2^t/p^{4/3}$ & $2^t/p^{5/4}$ & $2^t/p$ & $2^t/p^{5/6}$ \\
    \bottomrule
  \end{tabular}
\end{table}

We summarize this discussion in \Cref{tab.summary}, which compares for each $(u,v)$-product its normalized flops cost (equal to $uv$)
and its limit on $p$. To give a concrete indication of this limit we also print
the maximum bitsize of $p$ (that is,
the limit on $\log_2 p$ exclusive),
when the target floating-point arithmetic is double precision ($t=53$).

In summary, the following $(u,v)$-product algorithms are best used for the following bitsizes of $p$:
\begin{itemize}
  \item $\bsz{p} \in [1\phantom{0}, 26]$: use the $(1,1)$-product;
  \item $\bsz{p} \in [27,35]$: use the $(1,2)$-product;
  \item $\bsz{p} \in [36,39]$: use the $(1,3)$-product;
  \item $\bsz{p} \in [40,42]$: use the $(2,2)$-product or the $(1,4)$-product;
  \item $\bsz{p} \in [43,52]$: use the $(2,2)$-product;
 \item all of the above ranges should in practice be shifted down by a few bits due to the lower efficiency of the product when using a small block size;
 this makes the $(2,3)$-product potentially also of interest.
\end{itemize}

We conclude this section by discussing the storage cost of our multiword approach.
The $(u,v)$-product requires $k(um + vn)$ entries for the input words and $mn$ entries
for the output. Thus, the more words are used, the more storage is needed: the approach presents
a trade-off between the bitsize of $p$ that is supported and the memory usage.
Moreover, the use of concatenation introduces an additional temporary workspace requiring $vmn$ entries.
Interestingly, in the case of a tall-and-skinny matrix $B$ ($n\ll m,k$), the $(1,v)$-product variants
require a negligible storage overhead compared with the storage of matrix $A$, which makes these variants
much less storage intensive than variants with $u\ge 2$, such as the $(2,2)$-product.

\section{Performance benchmarks}\label{s:bench}

\subsection{Experimental setting}
We have developed two implementations of the proposed algorithms.
The first one is written in FORTRAN and targets CPU architectures; the second
one is written in CUDA and targets NVIDIA GPU architectures.
The code and the benchmarks are freely accessible at \url{https://gitlab.lip6.fr/lesnoff/phdcode}.

The CPU code was compiled using the \verb`ifort` compiler (v19.1.3) and the Intel MKL (2019.5) library, which we used for all BLAS operations.
It was run on two Intel Xeon Gold 6248 CPUs with 20 cores each at 2.50GHz,
which have a double precision theoretical peak performance of about 1,600 Gflops/s.

The GPU code was compiled with CUDA v12.6 and the flags: \verb`-arch=sm_80`, \verb`g++ 11.4.0` and \verb`-std=c++17`;
all the CUDA instructions are executed on the default stream.
We used cuBLAS for all BLAS operations.
The code was run on an NVIDIA A100 GPU,
which has a theoretical peak performance of about 19000 Gflops/s for double
precision arithmetic using tensor cores.

We have written CUDA kernels for the few operations that were not directly available through cuBLAS.
This includes in particular
kernels to perform the elementwise modular reductions and floor operations on a matrix.

As is common when comparing algorithms that perform different number of flops, we choose as performance metric
the ``effective'' Gflops/s rate, defined as
\begin{equation}
  \textrm{Effective Gflops/s} = \frac{2mkn}{t_\mathrm{avg}} \times 10^{-9}
\end{equation}
where $t_\mathrm{avg}$ is the execution time of the algorithm in seconds averaged over 10 runs
and where $2mkn$ corresponds to the number of flops performed by one matrix product of dimensions $m\times k\times n$.
This metric is best understood as a scaled inverse of the execution time; it can also provide some indication
of how well the hardware is utilized, although
care should be taken when comparing it to the theoretical Gflops/s peaks given above,
since even the (1,1)-product performs more than $2mkn$ flops (due to the modular reductions).

Since the values of the matrix coefficients do not affect the performance of
the algorithms, we simply generate them randomly. We consider two scenarios
which differ on both the matrix dimensions and what is included
in the execution time of the multiword algorithms.
\begin{itemize}
  \item Large square matrices (\Cref{s:square}): we first benchmark the algorithms
  in a general scenario involving large square matrices with $m=k=n=10016$,
with no particular application in mind. In this scenario, the execution time of the multiword algorithms
includes everything: the time for computing the product but also the time for computing the decomposition of both matrices.
Since the matrices are large and square, the former requires $O(n^3)$ flops whereas the latter only requires $O(n^2)$ flops, so that
the performance of the algorithms are driven by the performance of the product.
We do not test the use of concatenation (\Cref{alg:UVMW-concat}) in this scenario, since all matrix dimensions are large.
We use dimensions that are multiples of 32 because this leads to
more consistent and better performance on GPU.
\item Unbalanced matrices (\Cref{s:unbal}): in this second scenario,
  we consider a matrix product with unbalanced dimensions,
  $m=10923$, $k=32768$,
  and $n=32$; $B$ is thus a tall-and-skinny matrix. These dimensions
  of matrices are motivated by the polynomial system solving
  application where one needs to compute the minimal/characteristic
  polynomial of a square matrix of order $k$ but with only $m$ dense
  rows~\cite{Berthomieu2022,FaugereMouSparseFGLM2017}.
  The remaining $k-m$ rows are actually very sparse as they are
  rows of the identity matrix. This minimal/characteristic polynomial
  is computed using the block-Wiedemann
  algorithm~\cite{Coppersmith1994,hyun_block-krylov_2019} whose
  bottleneck consists in performing $2 k/n$ iterated products of the
  $m\times k$ matrix $A$ with a $k\times n$ matrix $B$, where $n\ll k$ is
   a block size parameter under our control; $n=32$ is a typical choice.
  Note that matrix $A$ is fixed throughout all iterations. Therefore,
  in this scenario, we do not include the time for computing the multiword decomposition of matrix $A$,
  which can be computed only once and reused for all iterations. We thus
  only measure the time for computing the decomposition of $B$
  and for computing the product. Again, because the product requires $O(mkn)$ flops whereas
  the decomposition of $B$ only requires $O(kn)$ flops, the cost of the decomposition of $B$ is negligible.
  In this scenario we will test the use of concatenation on matrix $B$ to increase its right dimension $n$,
  which is quite small.
\end{itemize}

Overall, our benchmark considers three scenarios (square matrices, and unbalanced matrices with or without concatenation),
for two architectures (CPU and GPU).
This leads to six different figures as summarized in \Cref{tab:figs}.

\begin{table}[h]
  \centering
  \caption{Summary of the benchmarks and the corresponding figures.\label{tab:figs}}

  \begin{tabular}{lll}
    \toprule
    & CPU & GPU \\
    \midrule
    Square matrices & \cref{fig.CPUsquare} & \cref{fig.GPUsquare} \\
    Unbalanced matrices (without concatenation) & \cref{fig.CPUunbal} & \cref{fig.GPUunbal} \\
    Unbalanced matrices (with concatenation) & \cref{fig.CPUunbal-concat} & \cref{fig.GPUunbal-concat} \\
    \bottomrule
  \end{tabular}
\end{table}

\subsection{Discussion of the results}

\subsubsection{Square matrices}
\label{s:square}

\begin{figure}
  \centering
  \includegraphics[width=.9\linewidth]{./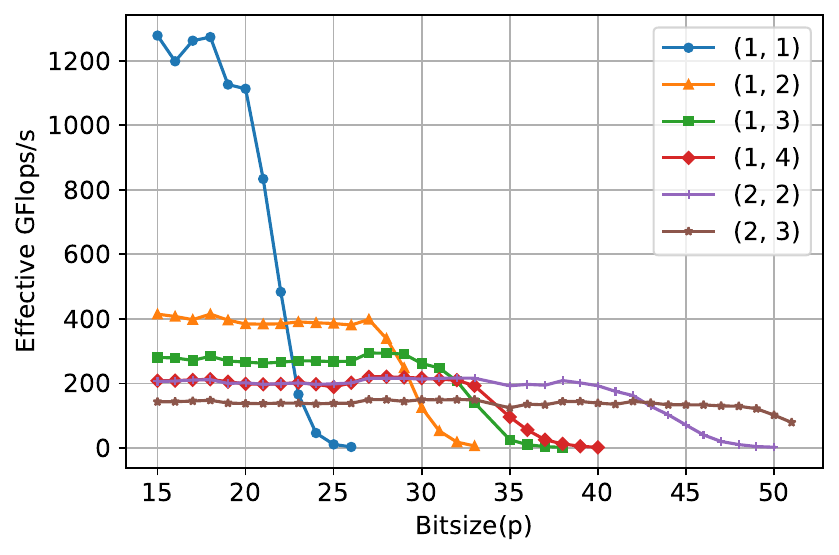}\label{fig.CPUsquare}
  \caption{Performance benchmark for square matrices on CPU.}
\end{figure}
\begin{figure}
  \centering
  \includegraphics[width=.9\linewidth]{./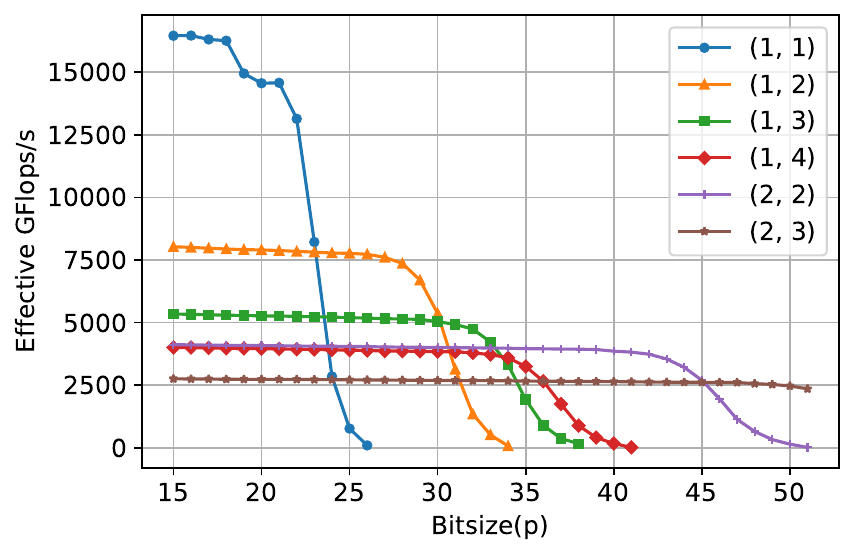}\label{fig.GPUsquare}
  \caption{Performance benchmark for square matrices on GPU.}
\end{figure}

We begin by discussing the results for square matrices on CPU (\Cref{fig.CPUsquare}).
All variants exhibit the same trend with two distinct regimes depending on the bitsize of $p$: first, a performance plateau which corresponds to the maximum
performance achievable when $p$ is small enough so that the cost of the reductions is negligible; then, a performance drop when $p$ begins approaching its limit,
due to a decreasing block size $\lambda$, which leads to a greater number of modular reductions and more inefficient matrix products.

For example, the (1,1)-product (the reference single word algorithm) achieves a performance plateau of 1200 Gflops/s
which is reasonably close to the 1600 Gflops/s theoretical peak of the hardware.
This confirms that when $p$ is small enough, the (1,1)-product is very efficient and its performance is driven by the matrix product.
However, when $p$ becomes larger, the performance drops rapidly.
Thus, although the (1,1)-product still produces correct results for primes with 24, 25 and 26 bits, the performance
in these cases is too low to be practical.

Our benchmarks therefore confirm the interest of the proposed multiword variants, which can handle larger primes while maintaining high performance.
In particular, the $(1,2)$-product outperforms the $(1,1)$-product for $\bsz(p) \ge 23$.
It achieves a performance plateau of 400 Gflops/s, about $3\times$ lower than the performance
plateau of the $(1,1)$-product.
Note that this $3\times$ time increase (which is larger than the $2\times$ flops increase) can be explained by analyzing the time breakdown
of the $(1,2)$-variant. While the $(1,1)$-variant essentially consists of a single block matrix product (\Cref{alg:blockProduct}),
the $(1,2)$-variant also requires computing the multiword decomposition of matrix $B$ and the scalings by $\delta$ and $\gamma$ with \Cref{alg:Joris}.
Despite requiring a negligible amount of flops, in practice these extra operations are less efficient than the block product and thus become non-negligible:
they represent about 26\% and 6\% of the total time for the $(1,2)$-variant, respectively.

While the $(1,2)$-product remains correct until $\bsz(p) \le 35$
the $(1,3)$-product starts outperforming it for $\bsz(p) \ge 29$, with a performance plateau of about 280 Gflops/s.
The $(1,4)$ and $(2,2)$-products both require 4 products and thus achieve the same performance plateau of about 200 Gflops/s, which starts
outperforming the $(1,3)$-product when $\bsz(p) \ge 33$.
In this scenario, the $(1,4)$-product therefore never significantly outperforms the $(2,2)$-product, which maintains its
plateau for far larger primes.
As expected, the $(2,2)$-product remains correct for all tested primes; however, its performance
eventually drops and gets surpassed by that of the $(2,3)$-product, when $\bsz(p) \ge 43$.
Even for such large primes, the $(2,3)$-product allows for
an almost constant performance of about 150 GFlops/s, which is quite satisfactory given the size of $p$.
Moreover this shows that using more than $3\times2=6$ subproducts would not be useful.

All of the above comments on the CPU benchmark also apply to the GPU one (\Cref{fig.GPUsquare}), which exhibits
similar trends. The performance of the $(1,1)$-product plateaus at 16000 Gflops/s for small primes, but is
rapidly surpassed by that of the multiword variants when $p$ gets larger.
One notable observation is that the performance plateau of the $(u,v)$-product is almost perfectly equal
to that of the $(1,1)$-product divided by $uv$, which suggests that the performance is entirely driven by the matrix product.
Thus, the $(1,2)$-product plateaus at 8000 Gflops/s, the $(1,3)$-product at 5300 Gflops/s, etc.
The points of crossover (points for which the best algorithm changes), while not exactly equal as in the CPU benchmark,
remain similar.

\subsubsection{Unbalanced matrices and effect of concatenation}
\label{s:unbal}

\begin{figure}
  \centering
  \includegraphics[width=.9\linewidth]{./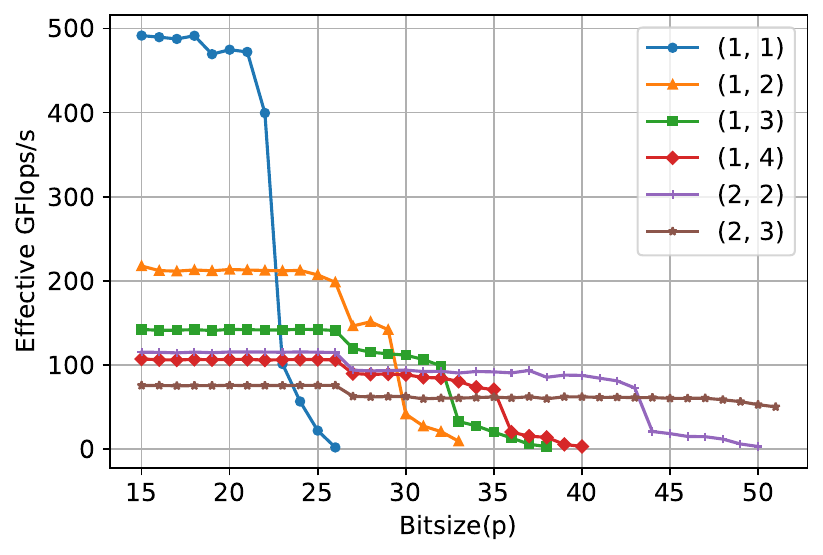}\label{fig.CPUunbal}
  \caption{Performance benchmark for unbalanced matrices on CPU.}
\end{figure}

\begin{figure}
  \centering
  \includegraphics[width=.9\linewidth]{./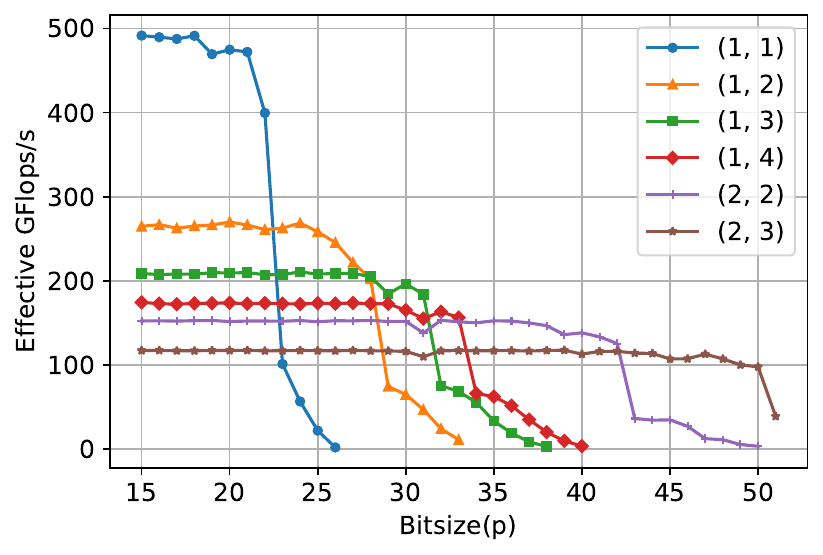}\label{fig.CPUunbal-concat}
  \caption{Performance benchmark for unbalanced matrices on CPU, with concatenation.}
\end{figure}

\begin{figure}
  \centering
  \includegraphics[width=.9\linewidth]{./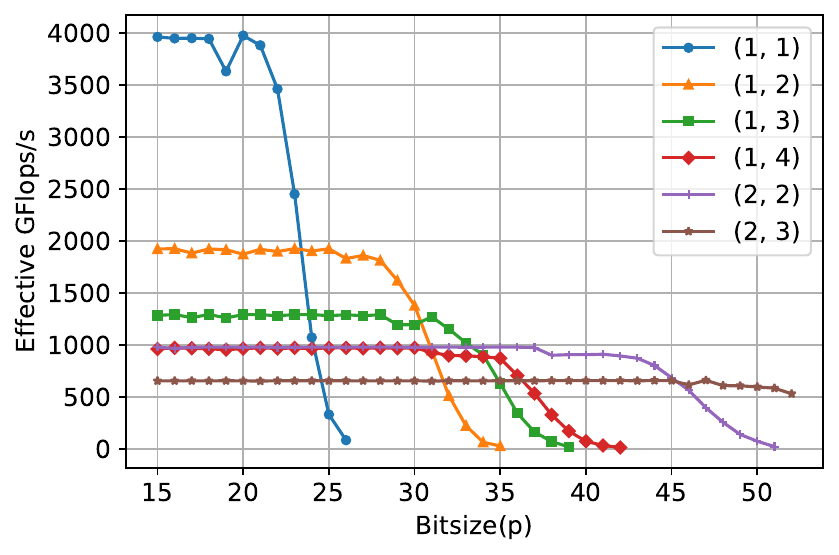}\label{fig.GPUunbal}
  \caption{Performance benchmark for unbalanced matrices on GPU.}
\end{figure}

\begin{figure}
  \includegraphics[width=.9\linewidth]{./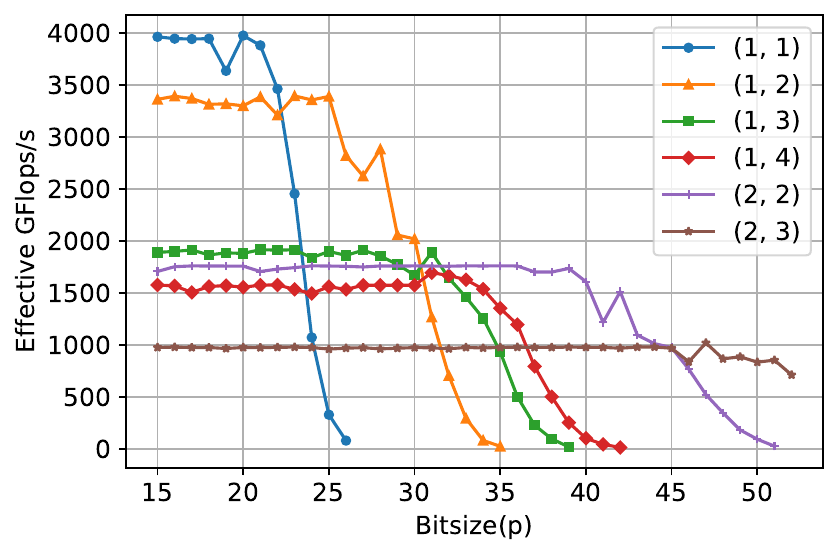}\label{fig.GPUunbal-concat}
  \caption{Performance benchmark for unbalanced matrices on GPU, with concatenation.}
\end{figure}

\Crefrange{fig.CPUunbal}{fig.GPUunbal-concat} show the performance benchmarks for unbalanced matrices.
We can observe the same overall trends as for square matrices, with one notable difference: the matrix products in this case have much lower arithmetic intensity.
Thus, the absolute performance values are smaller, that is, farther from the theoretical peak: the $(1,1)$-product
plateaus at about 500 Gflops/s on CPU (\Cref{fig.CPUunbal}) and 4000 Gflops/s on GPU (\Cref{fig.GPUunbal}).
Nevertheless, the relative performance of the multiword variants remains similar than previously and, in particular,
we confirm once more the ability of these variants to handle larger primes while retaining satisfactory performance.

Moreover, because of the lower arithmetic intensity of the product, using concatenation
in the multiword product becomes interesting.
This is illustrated in the performance benchmarks of \Cref{fig.CPUunbal-concat} (CPU) and \Cref{fig.GPUunbal-concat} (GPU),
for which we replace the multiword product (\Cref{alg:UVMW}) with its concatenated variant (\Cref{alg:UVMW-concat}).
The benchmarks show indeed that the performance of the multiword variants can be significantly improved by the use of concatenation (note that the (1,1)-product
is unaffected by this change and its performance remains identical). \Cref{tab:concatBaseline}
plots the increase of the performance plateau of the multiword variants achieved by the use of concatenation.
On CPU, we observe greater performance increases for greater values of $v$ (for example,
21\%, 47\%, and  63\% increase for the $(1, 2)$, $(1,3)$ and $(1,4)$ variants, respectively).
This is expected since a larger $v$ corresponds to a larger increase of the arithmetic intensity.
To a lesser extent, greater values of $u$ also lead to greater performance increases
(for example, 22\% vs 32\% increase for the $(1,2)$ and $(2,2)$ variants).
An interesting consequence of this behavior is that, thanks to concatenation,
the $(1,4)$-product achieves a performance plateau of 174 Gflops/s which is higher than
that of the $(2,2)$-product (152 Gflops/s). Therefore, for $\bsz(p)=32$ or $33$,
the $(1,4)$-product slightly outperforms the $(2,2)$ one (see \Cref{fig.CPUunbal-concat}).

While concatenation also leads to significant performance increases on GPU, the trend for different $(u,v)$ variants
is more unexpected. As shown in \Cref{tab:concatBaseline}, the variants with $v=2$ benefit from concatenation
much more than the other variants, especially those with $v=3$.
After investigating this surprising behavior, we have determined that this is
in fact because the cuBLAS matrix product performance is actually lower for
$n=96$ (corresponding to $v=3$) than for $n=64$ (corresponding to $v=2$).
As a result of this behavior,
the $(1,3)$ and $(1,4)$ variants are never better than the $(2,2)$ one.

\begin{table}[h]
  \centering
  \caption{Improvement of the performance plateau (Gflops/s) of multiword variants by the use of concatenation
(see \Crefrange{fig.CPUunbal}{fig.GPUunbal-concat}).}
  \label{tab:concatBaseline}
  \begin{tabular}{clccccc}
    \toprule
    & & $(1, 2)$ & $(1, 3)$ & $(1, 4)$ & $(2, 2)$ & $(2, 3)$ \\
    \midrule
    \multirow{3}{*}{CPU} & Non-concatenated (Gflops/s) & 218 & 142 & 107 & 115 & 76\\
    & Concatenated (Gflops/s) & 265 & 209 & 174 & 152 & 117 \\
    & Increase & 22\% & 47\% & 63\% & 32\% & 55\% \\
    \midrule
    \multirow{3}{*}{GPU} & Non-concatenated (Gflops/s) & 1932 & 1314 & 980 & 995 & 663 \\
    & Concatenated (Gflops/s) & 3438 & 1921 & 1600  & 1776 & 1004 \\
    & Increase & 78\% & 46\% & 63\% & 79\% & 52\% \\
    \bottomrule
  \end{tabular}
\end{table}

\subsubsection{Summary: variant selection}

\Cref{tab:synthesis} summarizes the conclusions of these experiments by indicating, for each of the six benchmarks of \Cref{tab:figs},
the range of bitsizes for which a given $(u,v)$ variant is the best.
We can see that the crossover bitsizes (where the best variant changes), while not exactly equal, are very similar from one benchmark to the other.
In particular, the existing $(1,1)$ approach is systematically outperformed before its theoretical limit of 26 bits, with crossover
bitsizes between 23 and 25. Moreover, the table also shows that each of the multiword variants considered in our benchmarks can be the best for some range
of bitsizes, which confirms the importance of adapting $(u,v)$ for optimizing the cost of the product.

\begin{table}[!h]
  \centering
  \caption{Synthesis of the bitsizes for which a given $(u,v)$ variant performs best.\label{tab:synthesis}}
  \begin{tabular}{lcccccc}
    \toprule
    & $(1,1)$ & $(1,2)$ & $(1,3)$ & $(1,4)$ & $(2,2)$ & $(2,3)$ \\
    \midrule
    Theory (\Cref{ss:cost}) & [1,26] & [27,35] & [36,39] & ---     & [40,51] & [52,52] \\
    CPU square              & [1,22] & [23,28] & [29,31] & ---     & [32,42] & [43,52] \\
    CPU unbalanced          & [1,22] & [23,29] & [30,32] & ---     & [33,43] & [44,52] \\
    CPU unbalanced concat   & [1,22] & [23,27] & [28,31] & [32,33] & [34,42] & [43,52] \\
    GPU square              & [1,23] & [24,30] & [31,33] & ---     & [34,45] & [46,52] \\
    GPU unbalanced          & [1,23] & [24,30] & [31,33] & ---     & [34,44] & [45,52] \\
    GPU unbalanced concat   & [1,22] & [23,30] & [31,31] & ---     & [32,43] & [44,52] \\
    \bottomrule
  \end{tabular}
\end{table}

\section{Comparison with the state-of-the-art}

In this section, we discuss how our method compares
with other approaches from the state-of-the-art.

\subsection{Link with precision emulation approaches}
\label{sec.emul}

The problem of precision emulation
is to compute a floating-point matrix product $C=AB$ with high accuracy while only using low precision products.
We can distinguish several approaches depending on how this is achieved:
\begin{itemize}
\item fp32 emulation based on multiword mixed precision matrix multiply--accumulate~\cite{fhlm23,ooyo22}:
this approach decomposes $A$ and $B$ into multiple words and computes the products $A_iB_j$ with
fp32 accumulation available on NVIDIA tensor core GPUs~\cite{bhlm20}.
\item fp64 emulation based on multiword (Ozaki-I) approach~\cite{ooy24,uoi25}:
this approach also decomposes $A$ and $B$ into multiple words, but using the Ozaki scheme~\cite{ooor12}
so that the products $A_iB_j$ can be evaluated exactly; it can in particular
efficiently harness the 8-bit integers available on NVIDIA GPUs~\cite{ooy24}.
\item fp64 emulation based on multimodular (Ozaki-II) approach~\cite{oui25}: this latest approach
uses CRT-based multimodular arithmetic and is quite similar to the approach discussed in \cref{sec.CRT}.
\end{itemize}

A link can thus be made between the problem of modular matrix multiplication (the goal of this article)
and that of floating-point precision emulation. Indeed, both problems
can be tackled with either multiword or multimodular approaches.
However, there are also significant differences between the two contexts: modular matrix multiplication
involves modular reductions, which are not present in precision emulation; moreover, it must be exact,
whereas precision emulation involves several approximations. These differences make a dedicated study
of these approaches in each context necessary.

\subsection{Comparison with multimodular CRT-based approaches}
\label{sec.CRT}

\subsubsection{Number of products required by multimodular approaches}

An approach to compute the modular matrix product $C=AB\bmod p$ when $p>2^{t/2}$
is to rely on multimodular, CRT-based arithmetic~\cite{dgls18}.
The idea is to evaluate $A_i = A\bmod m_i$ and $B_i = B\bmod m_i$
for a set of $s$ coprime moduli $m_1, \ldots, m_s$.
If the moduli each satisfy
\begin{equation}\label{eq.CRT1}
\lambda (m_i-1)^2 \le 2^{t},
\end{equation}
then the products $A_iB_i$ can be computed exactly using \Cref{alg:blockProduct}
with block size $\lambda$. Then the exact
product can be recovered using the CRT
if $M = \prod_{i=1}^s m_i$ is sufficiently large.
Specifically, by the CRT we know that there exists
a unique $C$ with coefficients less than $M$ that satisfies
$C=AB \bmod M$. Thus, if
\begin{equation}\label{eq.CRT2}
M>k(p-1)^2,
\end{equation}
then $C=AB$ is the exact product.

Putting \eqref{eq.CRT1} and \eqref{eq.CRT2} together shows that
we must have
\begin{equation}\label{eq.CRT3}
k(p-1)^2 < 2^{s(t-\log_2(\lambda))/2}
\end{equation}
and so we need at least
\begin{equation}\label{eq.numprodMM}
s = \ceil{\frac{4\log_2(p) + 2\log_2(k)}{t-\log_2(\lambda)}}
\end{equation}
moduli and thus matrix products.
Note that this lower bound may slightly underestimate the number of required products because
of the constraint that the moduli must be coprime, and hence may not all be equal
to the maximum value $2^{(t-\log_2(\lambda))/2}$.

\subsubsection{Comparison with our multiword approach}

Let us now compare
the number of products required by the multimodular approach
and by our proposed multiword approach.
Neglecting the $p-1$ term in
\cref{eq.readableCondition} shows that our approach requires
\begin{equation}\label{eq.numprodMW}
  uv = \ceil{\frac{(u+v)\log_2(p)}{t - \log_2(\lambda)}}.
\end{equation}
products.

Comparing \eqref{eq.numprodMM} and \eqref{eq.numprodMW} shows that
our approach will require less products than the multimodular one when
\begin{equation}
(u+v)\log_2(p) \le 4\log_2(p) + 2\log_2(k).
\end{equation}
This condition is certainly satisfied when $u+v\le 4$,
and so the (1,2), (1,3), and (2,2) multiword
variants all require less products than the multimodular one.
The (1,4) and (2,3) variants may also require less products
for large matrices for which the $2\log_2(k)$ term becomes significant.
Since the (2,2) variant can handle any prime less than $2^{t-\log_2(\lambda)}$,
we can conclude that our approach
is of interest for primes of bitsize between $(t-\log_2(\lambda))/2$ and $t-\log_2(\lambda)$, that is,
for primes roughly between half and the full mantissa bitsize $t$ (shifted down
by a few bits depending on the desired block size $\lambda$).

We illustrate this comparison in \Cref{fig.nbprod-comparison} for $\lambda=1$
(block size leading to the lowest possible number of products)
and $\lambda=512$ (block size that should be sufficient to attain good performance
in many settings).

Moreover, note that the multimodular approach cannot concatenate
different products together as in the multiword one, because all products
involve different matrices. Therefore, when the number of required products
is the same for both approaches, the multiword one seems preferable,
especially for matrices with unbalanced dimensions.

\begin{figure}
\begin{subfigure}{\textwidth}
  \centering
  \includegraphics[width=0.9\textwidth]{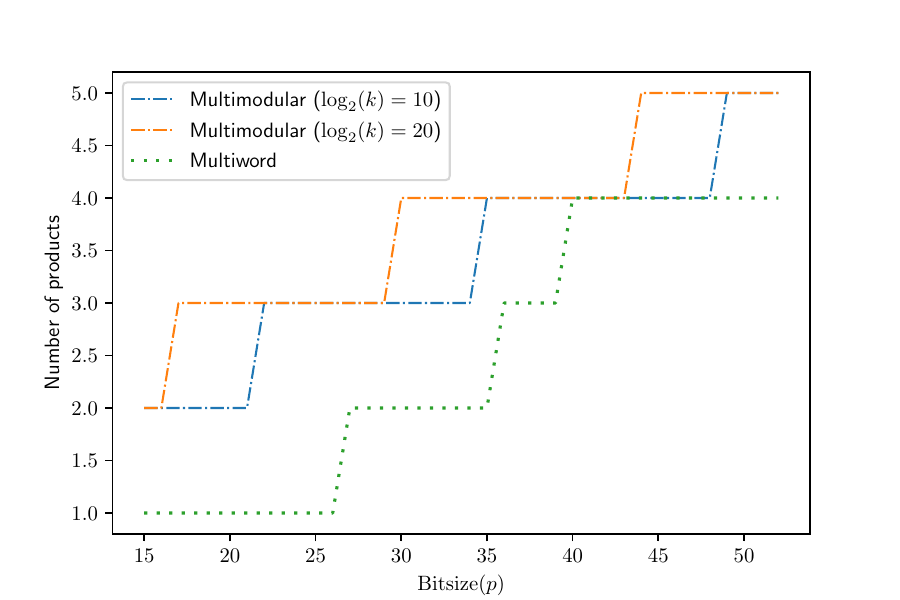}
  \caption{$\lambda=1$}
\end{subfigure}
\begin{subfigure}{\textwidth}
  \centering
  \includegraphics[width=0.9\textwidth]{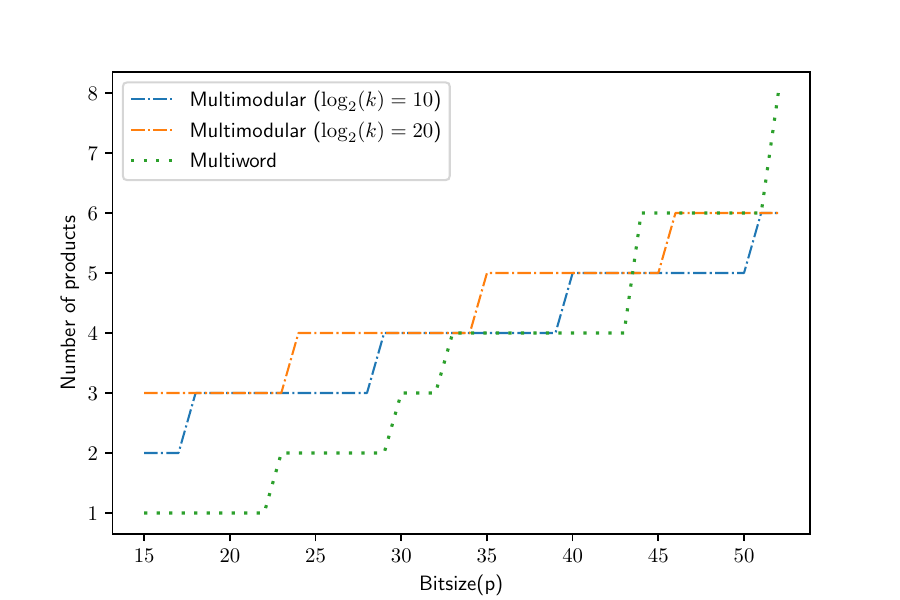}
  \caption{$\lambda=512$}
\end{subfigure}
  \caption{Number of products required by the multimodular and multiword approaches.\label{fig.nbprod-comparison}}
\end{figure}

An actual performance comparison between the two types of approaches
is outside our scope, but as we have shown for our approach, for large matrices,
performance is mainly driven by the performance of the matrix products
and hence the number of products.

\section{Conclusion}
\label{s:concl}

We have presented a new approach to efficiently compute modular matrix multiplication $C=AB\bmod p$ in floating-point arithmetic.
The existing single word product is limited to bitsizes of $p$ less than 26 and becomes very inefficient when $p$ approaches this limit.
We have proposed in \Cref{alg:UVMW} a new multiword product that
decomposes $A$ and $B$ into $u$ and $v$ words, respectively,
and computes $C$ with $uv$ modular matrix products. We have also described a concatenated variant in \Cref{alg:UVMW-concat}
which can be more efficient when the products have low arithmetic intensity.
We have proved in \Cref{prop:UVMW} the correctness of this approach
and determined the maximum size of $p$ that can be handled
for a given $(u,v)$ choice.
As summarized in \Cref{tab.summary},
our multiword approach allows for handling bitsizes as large as 52,
and its cost can be optimized by adapting $(u,v)$ depending on the size of $p$.
Our performance benchmarks on CPU and GPU architectures (see \Cref{tab:figs}) confirm the efficiency
of this new approach.

This work opens several perspectives for further performance improvements.
First, the block products $A_jB_j$ in \Cref{alg:blockProduct}
could be computed in parallel via batched matrix products kernels,
at the cost of extra memory storage.
Second, the multiword approach could be extended to perform the $A_iB_j$ matrix products
in lower precision arithmetic. While this would require a greater number of words (and therefore matrix products)
to handle a given bitsize of $p$, it would also allow the use of low precision hardware, in particular GPU tensor cores~\cite{bhlm20}.

%
%

\section*{Acknowledgements}
This work was performed using HPC resources from\\ GENCI-IDRIS (Grant
AD010614986R1).
It was partially supported by the
the joint ANR-FWF
\textsc{ECARP} (ANR-19-CE48-0015) project,
and by the
EAGLES (ANR-22-CE91-0007),
\textsc{De Rerum Natura} (ANR-19-CE40-0018),
InterFLOP (ANR-20-CE46-0009),
NuSCAP (ANR-20-CE48-0014),
MixHPC (ANR-23-CE46-0005-01),
and NumPEx Exa-MA (ANR-22-EXNU-0002)
projects of the French National Agency for Research (ANR).


\bibliographystyle{siamplain}
\bibliography{strings,refs,tmary,DoctoratLip6}

\def\noopsort#1{}\def\hbk{hardback}\def\pbk{paperback}
\begin{thebibliography}{10}

\bibitem{adfm25}
{\sc A.~Abdelfattah, J.~Dongarra, M.~Fasi, M.~Mikaitis, and F.~Tisseur}, {\em
  Analysis of floating-point matrix multiplication computed via integer
  arithmetic}, arXiv preprint arXiv:2506.11277,  (2025).

\bibitem{Arnold03}
{\sc E.~A. Arnold}, {\em Modular algorithms for computing gr{\"o}bner bases},
  J. Symbolic Comput., 35 (2003), pp.~403--419,
  \url{https://doi.org/https://doi.org/10.1016/S0747-7171(02)00140-2}.

\bibitem{Berthomieu2022}
{\sc J.~Berthomieu, V.~Neiger, and M.~Safey El~Din}, {\em Faster change of
  order algorithm for {G}r\"{o}bner bases under shape and stability
  assumptions}, in Proceedings of the 2022 International Symposium on Symbolic
  and Algebraic Computation, ISSAC '22, New York, NY, USA, 2022, Association
  for Computing Machinery, p.~409–418,
  \url{https://doi.org/10.1145/3476446.3535484}.

\bibitem{bhlm20}
{\sc P.~Blanchard, N.~J. Higham, F.~Lopez, T.~Mary, and S.~Pranesh}, {\em Mixed
  precision block fused multiply-add: {E}rror analysis and application to {GPU}
  tensor cores}, SIAM J. Sci. Comput., 42 (2020), pp.~C124--C141,
  \url{https://doi.org/10.1137/19M1289546}.

\bibitem{BunchHopcroft74}
{\sc J.~R. Bunch and J.~E. Hopcroft}, {\em Triangular factorization and
  inversion by fast matrix multiplication}, Mathematics of Computation, 28
  (1974), pp.~231--236,
  \url{https://doi.org/10.1090/S0025-5718-1974-0331751-8}.

\bibitem{BurgisserClausenShokrollahi97}
{\sc P.~B{\"u}rgisser, M.~Clausen, and M.~A. Shokrollahi}, {\em Algebraic
  Complexity Theory}, vol.~315 of Grundlehren der mathematischen
  Wissenschaften, Springer Berlin, Heidelberg, 1st~ed., 1997,
  \url{https://doi.org/10.1007/978-3-662-03338-8}.

\bibitem{Coppersmith1994}
{\sc D.~Coppersmith}, {\em Solving homogeneous linear equations over
  {$\mathrm{GF}(2)$} via block {W}iedemann algorithm}, Math. Comp., 62 (1994),
  pp.~333--350, \url{https://doi.org/10/b724r7}.

\bibitem{dgls18}
{\sc J.~Doliskani, P.~Giorgi, R.~Lebreton, and E.~Schost}, {\em Simultaneous
  conversions with the residue number system using linear algebra}, ACM Trans.
  Math. Software, 44 (2018), \url{https://doi.org/10.1145/3145573}.

\bibitem{DumasGiorgiPernet2008}
{\sc J.-G. Dumas, P.~Giorgi, and C.~Pernet}, {\em Dense linear algebra over
  word-size prime fields: the {FFLAS} and {FFPACK} packages}, ACM Trans. Math.
  Software, 35 (2008), pp.~1--42, \url{https://doi.org/10/dj6zp4}.

\bibitem{fhlm23}
{\sc M.~Fasi, N.~J. Higham, F.~Lopez, T.~Mary, and M.~Mikaitis}, {\em Matrix
  multiplication in multiword arithmetic: {E}rror analysis and application to
  {GPU} tensor cores}, SIAM J. Sci. Comput.,  (2023),
  \url{https://doi.org/10.1137/21M1465032}.

\bibitem{FaugereMouSparseFGLM2017}
{\sc J.-C. Faugère and C.~Mou}, {\em Sparse {FGLM} algorithms}, Journal of
  Symbolic Computation, 80 (2017), pp.~538--569,
  \url{https://doi.org/10.1016/j.jsc.2016.07.025}.

\bibitem{GeddesCzaporLabahn92}
{\sc K.~O. Geddes, S.~R. Czapor, and G.~Labahn}, {\em Algorithms for Computer
  Algebra}, Springer New York, NY, 1992, \url{https://doi.org/10.1007/b102438}.

\bibitem{fflas-ffpack}
{\sc T.~F.-F. group}, {\em {FFLAS-FFPACK}: {F}inite {F}ield {L}inear {A}lgebra
  {S}ubroutines / {P}ackage}, v2.5.0~ed., 2023.
\newblock \url{http://github.com/linbox-team/fflas-ffpack}.

\bibitem{flint}
{\sc W.~Hart, F.~Johansson, and S.~Pancratz}, {\em {FLINT}: {F}ast {L}ibrary
  for {N}umber {T}heory}, 2013.
\newblock Version 2.4.0, \url{http://flintlib.org}.

\bibitem{hyun_block-krylov_2019}
{\sc S.~G. Hyun, V.~Neiger, H.~Rahkooy, and Éric Schost}, {\em Block-{K}rylov
  techniques in the context of sparse-{FGLM} algorithms}, Journal of Symbolic
  Computation, 98 (2020), pp.~163--191,
  \url{https://doi.org/10.1016/j.jsc.2019.07.010}.
\newblock Special Issue on Symbolic and Algebraic Computation: ISSAC 2017.

\bibitem{jeru13}
{\sc C.-P. Jeannerod and S.~M. Rump}, {\em Improved error bounds for inner
  products in floating-point arithmetic}, SIAM J. Matrix Anal. Appl., 34
  (2013), pp.~338--344.

\bibitem{Nievergelt2003ScalarFM}
{\sc Y.~Nievergelt}, {\em Scalar fused multiply-add instructions produce
  floating-point matrix arithmetic provably accurate to the penultimate digit},
  ACM Trans. Math. Softw., 29 (2003), pp.~27--48,
  \url{https://api.semanticscholar.org/CorpusID:16228275}.

\bibitem{Ogita2005AccurateSum}
{\sc T.~Ogita, S.~M. Rump, and S.~Oishi}, {\em Accurate sum and dot product},
  SIAM J. Sci. Comput., 26 (2005), pp.~1955--1988,
  \url{https://doi.org/10.1137/030601818}.

\bibitem{ooy24}
{\sc H.~Ootomo, K.~Ozaki, and R.~Yokota}, {\em {DGEMM} on integer matrix
  multiplication unit}, Int. J. High Perform. Comput. Appl., 38 (2024),
  pp.~297--313, \url{https://doi.org/10.1177/10943420241239588}.

\bibitem{ooyo22}
{\sc H.~Ootomo and R.~Yokota}, {\em Recovering single precision accuracy from
  tensor cores while surpassing the fp32 theoretical peak performance}, Int. J.
  High Perform. Comput. Appl., 36 (2022), pp.~475--491,
  \url{https://doi.org/https://doi.org/10.1177/10943420221090256}.

\bibitem{ooor12}
{\sc K.~Ozaki, T.~Ogita, S.~Oishi, and S.~M. Rump}, {\em Error-free
  transformations of matrix multiplication by using fast routines of matrix
  multiplication and its applications}, Numer. Algorithms, 59 (2012),
  pp.~95--118, \url{https://doi.org/10.1007/s11075-011-9478-1}.

\bibitem{oui25}
{\sc K.~Ozaki, Y.~Uchino, and T.~Imamura}, {\em Ozaki scheme {II}: A
  {GEMM}-oriented emulation of floating-point matrix multiplication using an
  integer modular technique}, 2025, \url{https://arxiv.org/abs/2504.08009}.
\newblock arXiv:2504.08009.

\bibitem{ntl}
{\sc V.~Shoup}, {\em {NTL}: a library for doing number theory}, 2021,
  \url{http://www.shoup.net}.

\bibitem{uoi25}
{\sc Y.~Uchino, K.~Ozaki, and T.~Imamura}, {\em Performance enhancement of the
  {O}zaki scheme on integer matrix multiplication unit}, Int. J. High Perform.
  Comput. Appl., 39 (2025), pp.~462--476,
  \url{https://doi.org/10.1177/10943420241313064}.

\bibitem{vdHoevenLQ2016}
{\sc J.~van~der Hoeven, G.~Lecerf, and G.~Quintin}, {\em Modular {SIMD}
  arithmetic in {M}athemagix}, ACM Trans. Math. Software, 43 (2016),
  \url{https://doi.org/10/f82vvw}.

\bibitem{MCA}
{\sc J.~von~zur Gathen and J.~Gerhard}, {\em Modern Computer Algebra},
  Cambridge University Press, 3~ed., 2013,
  \url{https://doi.org/10.1017/CBO9781139856065}.

\end{thebibliography}

\newpage{}

\end{document}